\title{Localized inverse factorization%
\thanks{\today
\funding{This work was supported by the Swedish research council under grant 621-2012-3861 and the Swedish national strategic e-science research program (eSSENCE).}}}
\author{Emanuel H. Rubensson%
\thanks{Division of Scientific Computing, Department of Information Technology, Uppsala University, Box 337, SE-751 05 Uppsala, Sweden (\email{emanuel.rubensson@it.uu.se}, \email{anton.artemov@it.uu.se}, \email{anastasia.kruchinina@it.uu.se}, \email{elias.rudberg@it.uu.se}).}
\and
Anton G. Artemov%
\footnotemark[2]
\and
Anastasia Kruchinina%
\footnotemark[2]
\and
Elias Rudberg%
\footnotemark[2]
}
\begin{document}
\maketitle
\begin{abstract}
We propose a localized divide and conquer algorithm for inverse factorization $S^{-1} = ZZ^*$ of Hermitian positive definite matrices $S$ with localized structure, e.g.\ exponential decay with respect to some given distance function on the index set of $S$. The algorithm is a reformulation of recursive inverse factorization [J. Chem. Phys., 128 (2008), 104105] but makes use of localized operations only. At each level of recursion, the problem is cut into two subproblems and their solutions are combined using iterative refinement [Phys. Rev. B, 70 (2004), 193102] to give a solution to the original problem. The two subproblems can be solved in parallel without any communication and, using the localized formulation, the cost of combining their results is proportional to the cut size, defined by the binary partition of the index set. This means that for cut sizes increasing as $o(n)$ with system size $n$ the cost of combining the two subproblems is negligible compared to the overall cost for sufficiently large systems.

We also present an alternative derivation of iterative refinement based on a sign matrix formulation, analyze the stability, and propose a parameterless stopping criterion. We present bounds for the initial factorization error and the number of iterations in terms of the condition number of $S$ when the starting guess is given by the solution of the two subproblems in the binary recursion. These bounds are used in theoretical results for the decay properties of the involved matrices.

The localization properties of our algorithms are demonstrated for matrices corresponding to nearest neighbor overlap on one-, two-, and three-dimensional lattices as well as basis set overlap matrices generated using the Hartree--Fock and Kohn--Sham density functional theory electronic structure program Ergo [SoftwareX, 7 (2018), 107].

\end{abstract}
\section{Introduction}
A standard approach to the solution of generalized eigenproblems
\begin{equation} \label{eq:gen_eig_problem}
  Fc = \lambda Sc,
\end{equation}
where $F$ and $S$ are Hermitian and $S$ is positive definite, makes
use of an inverse factor of $S$ to transform the eigenvalue problem to
standard form~\cite{elpa_2014, martin_wilkinson_1968}.
If we have that
\begin{equation}
  S^{-1} = ZZ^*,
\end{equation}
then~\eqref{eq:gen_eig_problem} implies
\begin{equation} \label{eq:std_eig_problem}
  F_\perp x = \lambda x
  \quad \mathrm{with} \quad
  F_\perp = Z^*FZ  
  \quad \mathrm{and} \quad
  Zx = c.
\end{equation}
Inverse factors may also be used in the solution of linear systems,
either as part of a direct solver or as approximate inverse
preconditioners for iterative solvers.
We are here motivated by the solution of the self-consistent field
equations appearing in a number of electronic structure models such as
Hartree--Fock and Kohn--Sham density functional theory. In this
context,
$F$ is the Fock/Kohn--Sham matrix and 
the density matrix is given by a subset of the eigenvectors
of~\eqref{eq:gen_eig_problem} as
\begin{equation} \label{eq:dens_mat}
  D = \sum_{i=1}^{\mathrm{nocc}} c_ic_i^*
\end{equation}
where nocc is the number of occupied electron orbitals and the
eigenvalues are ordered in ascending order, $\lambda_1 \leq \lambda_2
\leq \dots \leq \lambda_{\mathrm{nocc}} < \lambda_{\mathrm{nocc}+1} \leq
\dots \leq \lambda_{n-1} \leq \lambda_n$. The corresponding density
matrix for the standard problem~\eqref{eq:std_eig_problem} is given by
\begin{equation}
  D_\perp = \sum_{i=1}^{\mathrm{nocc}} x_ix_i^*
\end{equation}
which is related to~\eqref{eq:dens_mat} by $D = Z D_\perp Z^*$.  In
self-consistent field calculations, $S$ is the basis set overlap
matrix, in other contexts referred to as the mass or Gram matrix.
Some of the most efficient methods to compute the density matrix
assume the eigenproblem is on standard
form~\cite{JaehoonYousung_2016, niklasson_2002, nonmono,
  assessment_2011, Suryanarayana_2013}. Unless $S=I$ (corresponding to
an orthogonal basis) the computation of the density matrix then
consists of three steps~\cite{book-chapter-rubensson}:
\begin{algorithmic}[1]
  \State Congruence transformation $F_\perp = Z^*FZ$
  \State Density matrix method to compute $D_\perp$ from $F_\perp$
  \State Congruence transformation $D = Z D_\perp Z^*$
\end{algorithmic}
A simple and efficient method for the second step sees the problem as
a matrix function $D_\perp = \theta(\mu I-F_\perp)$ where $\theta$ is
the Heaviside function and $\mu \in ]\lambda_{\textrm{nocc}},
  \lambda_{\textrm{nocc}+1}[$
and uses a polynomial expansion in $F_\perp$ to compute $D_\perp$.
The computational kernel is matrix-matrix multiplication for which
implementations achieving good performance usually exist regardless
of computational platform.  For systems with nonvanishing eigenvalue
gap at $\mu$ and local basis sets, the computational complexity with
respect to system size can be reduced to linear, $O(n)$, with control
of the forward error~\cite{accPuri}.  The foundation for linear
scaling methods has been discussed extensively in the literature, see
e.g.~\cite{benzi-decay, cloizeaux_1964, kohn_1959,
  kohn-nearsightedness_1996}. Of particular importance are the decay
properties of the density matrix~\cite{benzi-decay} that are the basis
of sparse approximations where only $O(n)$ matrix entries are
stored~\cite{Li1993-cutoff, sparsity-threshold, bringing_about}.  Here
we note that to take advantage of the decay properties in the
three-step approach outlined above, an inverse factor $Z$ that allows
for a sparse approximation with $O(n)$ entries must be used.

There is an infinite number of matrices $Z$ that are inverse factors
of $S$, i.e.~that fulfill $S^{-1}=ZZ^*$. The ones most commonly used
are the inverse square root (L{\"o}wdin)~\cite{Branislav_2007, Lowdin1956,
  VandeVondele_2012} and inverse Cholesky
factors~\cite{challacombe_1999, hierarchic_2007}, both of which have
the important property of decay of matrix element magnitude with
atomic separation~\cite{benzi-decay}.  The cost of standard methods
for their computation in general scales cubically with system size. A
linearly scaling alternative is based on iterative
refinement~\cite{Niklasson2004} which, given a starting guess $Z_0$,
produces a sequence of matrices $Z_1,Z_2,\dots$ with convergence to an
inverse factor of $S$ if the initial factorization error
\begin{equation}\label{eq:iter_refine_conv_crit}
\|I - Z_0^*SZ_0\|_2 < 1.
\end{equation}
 The iterative refinement converges to the inverse square root if
 $Z_0$ commutes with $S$ and to some other inverse factor otherwise.
 The starting guess is often set to the identity matrix, scaled so
 that~\eqref{eq:iter_refine_conv_crit} is
 fulfilled~\cite{Branislav_2007, VandeVondele_2012}.  Being based on
 matrix-matrix multiplication the iterative refinement approach has
 advantages similar to those of the recursive density matrix
 expansions for the second step in the three-step approach described
 above.

Recursive inverse factorization, considered in the present work, also
makes use of iterative refinement but does so in a hierarchical
manner~\cite{rubenssonBockHolmstromNiklasson}. A binary partition of
the index set $\{1,\dots,n\}$ of $S$ is applied corresponding to a binary principal
submatrix decomposition
\begin{equation}
  S = \begin{bmatrix}A & B \\B^* & C\end{bmatrix},
\end{equation}
inverse factors $Z_A$ and $Z_C$ of $A$ and $C$, respectively, are
computed and
\begin{equation}
  Z_0 = \begin{bmatrix}Z_A & 0 \\ 0 & Z_C\end{bmatrix}
\end{equation}
is used as starting guess for iterative refinement.  This binary
partitioning is applied recursively which gives the recursive inverse
factorization method.
The initial factorization error depends on the binary partition of the
index set and one may thus attempt to partition the index set so that
the initial error becomes as small as possible.  It was shown
in~\cite{rubenssonBockHolmstromNiklasson} that \emph{any} binary partition
or cut of the index set in two pieces
leads to~\eqref{eq:iter_refine_conv_crit} being fulfilled.
The index set may for example correspond to the vertices of a graph or
centers of atom-centered basis functions.  In this article, the terms
vertex and index are used interchangeably.

In the present work, we further analyze the recursive inverse
factorization method and propose a localized version. This variant
exhibits more localized computations and is even more amenable to
parallelization.
Under certain assumptions, including $S$ being
localized with respect to some given distance
function on its index set and provided that matrix entries with
magnitude below some fixed threshold value are discarded, we show that
using this localized inverse factorization the workload for the
iterative refinement used to glue together $Z_A$ and $Z_C$ is
proportional to the cut size.
Thus, provided that a principal submatrix cut can be done so that only
a small number of vertices, e.g. $o(n)$, are close to the cut, the
cost of the iterative refinement is negligible compared to the inverse
factorizations of $A$ and $C$ for sufficiently large systems,
e.g.\ $o(n)$ versus $O(n)$.  In case $S$ is the overlap matrix for a
local atom-centered basis set, the distance function may, with our
formulation of localization, be taken as the Euclidean distance
between basis function centers. In this case it is usually straightforward to
make a binary division such that only $o(n)$ vertices are close to the
cut. The two subproblems to compute inverse factors of $A$ and $C$ are
completely disconnected and thus embarrassingly parallel.

In Section~\ref{sec:iter_from_sign} we propose a sign matrix
formulation for inverse factorization. This formulation is in
Section~\ref{sec:sign_matrix} used in an alternative derivation of
iterative refinement, making the relation to sign matrix and density
matrix expansion methods evident.  The stability of both regular and
localized iterative refinement is considered and new stopping criteria
are proposed. In Section~\ref{sec:binary_decomp} we introduce the
binary principal submatrix decomposition and regular and localized
iterative refinement algorithms including starting guesses given by
the binary principal submatrix decomposition. We also derive
convergence results, giving a bound for the number of iterative
refinement iterations.  In Section~\ref{sec:localization} we introduce
the notion of exponential decay with respect to distance between
vertices and exponential decay away from the cut. We derive
localization results for the matrices occurring in regular and
localized iterative refinement. In Section~\ref{sec:recursive} we
present the full recursive and localized inverse factorization
algorithms. In Section~\ref{sec:numerexp} we present numerical
experiments to demonstrate the localization properties of the
recursive and localized inverse factorization algorithms. We end the
article with concluding remarks in Section~\ref{sec:concl}.

\section{Inverse factorization from a sign matrix formulation}\label{sec:iter_from_sign}
We present here a sign matrix formulation that we will use to derive
methods for the iterative refinement of inverse factors. 

\noindent
\begin{theorem}\label{thm:invfact_from_sign}
  Let $S$ be a Hermitian positive definite matrix and assume that $Q$
  is a nonsingular matrix. Then,
  \begin{equation} \label{eq:sign_relation}
    \mathrm{sign}\left(\begin{bmatrix}0 & Q^*S \\ Q & 0\end{bmatrix}\right) = 
      \begin{bmatrix} 0 & Z^*S \\ Z & 0 \end{bmatrix}
  \end{equation}
where $Z = Q (Q^*S Q)^{-1/2}$ and $ZZ^* = S^{-1}$.
\end{theorem}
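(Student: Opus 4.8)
The plan is to exploit the standard closed form for the sign of a block off-diagonal matrix. Recall that for any matrix $M$ with no purely imaginary eigenvalues, $\mathrm{sign}\!\left(\begin{bmatrix} 0 & M \\ N & 0\end{bmatrix}\right) = \begin{bmatrix} 0 & W \\ W^{-1} & 0\end{bmatrix}$ whenever $MN$ is positive definite, where $W = M(NM)^{-1/2} = (MN)^{-1/2}M$; this follows from the identity $\mathrm{sign}(X) = X(X^2)^{-1/2}$ together with the fact that the square of the block matrix is $\mathrm{diag}(MN, NM)$. First I would verify that the hypotheses of Theorem~\ref{thm:invfact_from_sign} put us in this situation: with $M = Q^*S$ and $N = Q$, the product $NM = Q Q^* S$ is similar to the Hermitian positive definite matrix $S^{1/2}QQ^*S^{1/2}$ (conjugate by $S^{1/2}$), hence has positive real spectrum, so the sign function is well defined and $(NM)^{-1/2}$ and $(MN)^{-1/2}$ make sense.

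Next I would compute the blocks explicitly. The lower-left block of the sign is $N(MN)^{-1/2} = Q\,(Q^*SQ)^{-1/2}$, which is exactly the claimed $Z$; here I use $MN = Q^*SQ$, which is Hermitian positive definite because $Q$ is nonsingular and $S$ is Hermitian positive definite, so its inverse square root is unambiguous. The upper-right block is $M(NM)^{-1/2} = Q^*S\,(QQ^*S)^{-1/2}$. To identify this with $Z^*S$, I would show $Z^* S = (Q(Q^*SQ)^{-1/2})^*S = (Q^*SQ)^{-1/2}Q^*S$ equals $Q^*S(QQ^*S)^{-1/2}$; equivalently, that $(Q^*SQ)^{-1/2}Q^* = Q^*(SQQ^*)^{-1/2}$, which is the intertwining relation $f(Q^*SQ)Q^* = Q^*f(SQQ^*)$ applied to $f(t)=t^{-1/2}$. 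That intertwining follows from $Q^*SQ\cdot Q^* = Q^*\cdot SQQ^*$ extended to polynomials and then to $t^{-1/2}$ via a functional-calculus/approximation argument (both $Q^*SQ$ and $SQQ^*$ have positive spectrum, the latter being similar to $S^{1/2}QQ^*S^{1/2}\succ0$).

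Finally, the factorization claim $ZZ^* = S^{-1}$ is a direct computation: $ZZ^* = Q(Q^*SQ)^{-1/2}(Q^*SQ)^{-1/2}Q^* = Q(Q^*SQ)^{-1}Q^* = QQ^{-1}S^{-1}(Q^*)^{-1}Q^* = S^{-1}$, using nonsingularity of $Q$. I expect the only real subtlety to be the careful justification of the intertwining identity $(Q^*SQ)^{-1/2}Q^* = Q^*(SQQ^*)^{-1/2}$ for the non-Hermitian argument $SQQ^*$ — one must note $SQQ^*$ is diagonalizable with positive eigenvalues (being similar to a positive definite matrix) so that $t^{-1/2}$ is a well-defined primary matrix function on it, and then pass from the polynomial identity $p(Q^*SQ)Q^* = Q^*p(SQQ^*)$ to the function $t^{-1/2}$. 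Everything else is routine block-matrix algebra.
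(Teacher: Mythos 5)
Your proof is correct and follows essentially the same route as the paper: form $X$, observe $X^2=\mathrm{diag}(Q^*SQ,\,QQ^*S)$ has positive spectrum, and read off the blocks from $\mathrm{sign}(X)=X(X^2)^{-1/2}$. The only cosmetic difference is that you identify the upper-right block with $Z^*S$ via an explicit intertwining argument for $t^{-1/2}$, whereas the paper gets the same identification by also invoking the equivalent form $\mathrm{sign}(X)=(X^2)^{-1/2}X$ (and obtains $ZZ^*=S^{-1}$ from $(\mathrm{sign}(X))^2=I$ rather than by your direct computation).
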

\begin{proof}
  Let 
  \begin{equation}
    X = \begin{bmatrix}0 & Q^*S \\ Q & 0\end{bmatrix}.
  \end{equation}
  Since congruence transformation preserves positive definiteness, see
  e.g.~\cite[Theorem~4.5.8]{book-matrix-analysis}, $Q^*SQ$ is
  positive definite. The matrices $QQ^*S$ and $Q^*SQ$  have the same eigenvalues
  since $Q^*S Q v = \lambda v$ implies $Q Q^*S u = \lambda u$ with $u=Qv$.
  Therefore,
  \begin{equation}
    X^2 = \begin{bmatrix} Q^*S Q & 0 \\ 0 & QQ^*S \end{bmatrix}
  \end{equation}
  is positive definite, $X$ has no eigenvalues on the imaginary axis,
  and sign$(X)$ is defined. Eq.~\eqref{eq:sign_relation} with $Z = Q
  (Q^*S Q)^{-1/2}$ follows directly from $\textrm{sign}(X) =
  X(X^2)^{-1/2}$ and $\textrm{sign}(X) = (X^2)^{-1/2}X$ and $ZZ^* = S^{-1}$ follows from $(\mathrm{sign}(X))^2 = I$.
\end{proof}

The special case of Theorem~\ref{thm:invfact_from_sign} with $Q=I$
leading to $Z=S^{-1/2}$ was shown by Higham~\cite{Higham1997}. 
This special case already shows that methods for the matrix sign
function can be used to compute the square root together with its
inverse. Theorem~\ref{thm:invfact_from_sign} can be used to reduce the
computational cost of the matrix sign function evaluation if an
approximate inverse factor is available or can be cheaply
obtained. This approximate inverse could be such that the condition
number of the problem is reduced and/or such that only a local portion
of the inverse factor needs to be updated. In general, $Z$ will not be
the inverse square root. However, if $Q$ is Hermitian and commutes
with $S$, then $Q$ and $S$
are simultaneously diagonalizable, i.e. $Q =
V\Lambda_Q V^*$ and $S = V\Lambda_S V^*$ with unitary $V$ and diagonal $\Lambda_Q, \Lambda_S$, and
\begin{align}
  Z & = Q (Q^*S Q)^{-1/2} 
   = Q ( V\Lambda_Q V^* V\Lambda_S V^* V\Lambda_Q V^* )^{-1/2} \\
  & = Q ( V\Lambda_Q^{-1} \Lambda_S^{-1/2} V^* ) 
   = Q Q^{-1}S^{-1/2} 
   = S^{-1/2}.
\end{align}
Theorem~\ref{thm:invfact_from_sign} is closely related to and can be
shown using~\cite[Lemma~4.3]{HighamMackeyMackeyTisseur_2005}. We note
that $Z$ may be computed using some method for the inverse square root
applied to $Q^*SQ$ followed by multiplication with $Q$ from the
left. We note also that the eigenvalues of $X$ are real and given as
positive-negative pairs $\lambda_1,-\lambda_1,\lambda_2,-\lambda_2,
\dots$.

\section{Iterative refinement from sign matrix methods}
\label{sec:sign_matrix}
Based on the sign matrix formulation above we provide here an
alternative derivation of the iterative refinement method
of~\cite{Niklasson2004}. The second order refinement can be derived
from the Newton--Schulz sign function iteration~\cite{book-higham}
\begin{equation} \label{eq:newton_schulz}
  X_{i+1} = \frac{1}{2}(3X_i-X_i^3)
\end{equation}
with
\begin{equation} \label{eq:single}
  X_i = \begin{bmatrix} 0 & Z_i^*S \\ Z_i & 0 \end{bmatrix}.
\end{equation}
This gives the iteration
\begin{align}
  \begin{bmatrix} 0 & Z_{i+1}^*S \\ Z_{i+1} & 0 \end{bmatrix} & =
  \frac{1}{2}\left(3 \begin{bmatrix} 0 & Z_i^*S \\ Z_i & 0 \end{bmatrix}
  - \begin{bmatrix} 0 & Z_i^*S Z_i Z_i^*S \\ Z_iZ_i^*SZ_i & 0 \end{bmatrix}\right).
\end{align}
The structure of $X_i,\ i=0,1,\dots$ is preserved and therefore only a single
channel is needed for the iteration:
\begin{equation} \label{eq:iter_refine_m2} 
  Z_{i+1} = \frac{1}{2}(3Z_i-Z_i Z_i^*SZ_i),  \quad Z_0 =  Q.
\end{equation}
Higher order polynomial iterations can be derived using the condition
that the polynomial has fixed points and a number of vanishing
derivatives at $-1$ and $1$. These can be written as
\begin{equation}\label{eq:sign_higher_order}
  X_{i+1} = X_i \sum_{k=0}^m b_k(I-X_i^2)^k
\end{equation}
where
\begin{equation}
  b_0 = 1, \quad b_k = \frac{2k-1}{2k}b_{k-1}, \quad k=1,\dots, m
\end{equation}
and $m\geq 1$.
The sign matrix residual is in each iteration reduced as
\begin{equation}
  I-X_{i+1}^2 = \sum_{k=m+1}^{2m+1} c_k(I-X_i^2)^k
\end{equation}
where 
\begin{equation}
  c_k = \sum_{j=0}^{k-m-1} 2b_jb_{k-j} - \sum_{j=0}^{k-m-2} 2b_jb_{k-j-1}.
\end{equation}
This leads to the iterative refinement 
\begin{equation}\label{eq:iter_refine}
  Z_{i+1} = Z_i \sum_{k=0}^m b_k\delta_i^k
\end{equation}
with a reduction of the error
\begin{equation}
  \delta_{i+1} = \sum_{k=m+1}^{2m+1} c_k\delta_i^k
\end{equation}
where
\begin{equation} \label{eq:delta_i}
  \delta_i = I-Z_i^*SZ_i
\end{equation}
is the factorization error in iteration $i$.  Note that
\eqref{eq:iter_refine_m2} is the special case of
\eqref{eq:iter_refine} with $m=1$.  We have that $\sum_{k=m+1}^{2m+1}
|c_k| = 1$ and therefore the iterative refinement converges if
$\|\delta_0\|_2<1$.  This also means that
\begin{equation} \label{eq:error_reduction}
\|\delta_{i+1}\|_2 <
\|\delta_{i}\|_2^{m+1}
\end{equation}
and that an accuracy $\|\delta_{k}\|_2 <
\varepsilon$ is reached within
\begin{equation}\label{eq:no-of-iters}
  k = \left\lceil\frac{\log{\left(\frac{\log \varepsilon}{\log \|\delta_0\|_2}\right)}}{\log{(m+1)}}\right\rceil
\end{equation}
iterations.

An alternative to \eqref{eq:iter_refine_m2} is given by applying the
Newton--Schulz iteration with
\begin{equation}\label{eq:dual_channel_Xi}
  X_i = \begin{bmatrix} 0 & Y_i \\ Z_i & 0 \end{bmatrix}
\end{equation}
which gives a coupled or dual channel iteration
\begin{equation}\label{eq:dual_channel}
  \left\{
    \begin{array}{r@{\;}c@{\;}lr@{\;}c@{\;}l}
      Y_{i+1} & = & \frac{1}{2}(3Y_i - Y_iZ_iY_i), & \quad Y_0 & = & Q^*S, \\
      Z_{i+1} & = & \frac{1}{2}(3Z_i - Z_iY_iZ_i), & \quad Z_0 & = & Q.
    \end{array}
  \right.
\end{equation}
This iteration has previously been considered with $Q = I$.  Since $I$
commutes with $S$, $Y_i$ and $Z_i$ then converges to the matrix square
root and its inverse, respectively, as explained in the previous
section. The corresponding higher order iterations can be obtained by
inserting~\eqref{eq:dual_channel_Xi}
in~\eqref{eq:sign_higher_order}. This alternative is not further
pursued in this work.
We note that in the present context where the inverse factor does not
have to be the inverse square root a drawback
of~\eqref{eq:dual_channel} compared to~\eqref{eq:iter_refine_m2} is
that any accuracy lost during the iterations cannot be recovered.
The polynomials in~\eqref{eq:sign_higher_order} can be seen as special
cases of the Pad{\'e} recursions for the matrix sign
function~\cite{kenney_laub_1991}. Other Pad{\'e} recursions may also
be used in the present context but are not further considered in this
article.
We note also that the Newton-Schulz iteration
in~\eqref{eq:newton_schulz} is equivalent to the McWeeny
polynomial~\cite{McWeeny1956} that has been frequently used in
computations of the density matrix. Similarly, the higher order
polynomials in \eqref{eq:sign_higher_order} correspond to the higher
order Holas polynomials~\cite{Holas2001}.
Other alternatives not further pursued in this work includes the use
of nonsymmetric polynomials in the expansion as in for example the SP2
algorithm for the density matrix~\cite{niklasson_2002} and the use of
scaling techniques to reduce the number of
iterations~\cite{KenneyLaub1992, nonmono}.

\subsection{Local refinement}\label{sec:local_refinement}
If only a local correction to the approximate inverse factor is needed
the cost of the iterative refinement can be reduced.  Assuming that
$\delta_i$ has $o(n)$ non-negligible elements its computation
using~\eqref{eq:delta_i} will for large systems involve the
computation of many matrix elements that are negligibly small.  The
factorization error can be written
\begin{align}
  \delta_i = \delta_0 - Z_i^*S(Z_i-Z_0) - (Z_i-Z_0)^*SZ_0,
\end{align}
as was noted in~\cite{rubenssonBockHolmstromNiklasson},
or as an update to the previous step factorization error
\begin{align}
  \delta_{i} = \delta_{i-1} - Z_{i}^*S(Z_{i}-Z_{i-1}) - (Z_{i}-Z_{i-1})^*SZ_{i-1}
\end{align}
which gives a dual channel iteration
\begin{equation}\label{eq:local_iter}
  \left\{
    \begin{array}{r@{\;}c@{\;}lr@{\;}c@{\;}l}
      Z_{i+1} & = & Z_i \sum_{k=0}^m b_k\delta_i^k, \\ 
      \delta_{i+1} & = & \delta_i - Z_{i+1}^*S(Z_{i+1}-Z_i) - (Z_{i+1}-Z_i)^*SZ_i. 
    \end{array}
  \right.
\end{equation}
In practice, the iteration is evaluated as
\begin{equation}\label{eq:local_iter_practice}
  \left\{
  \begin{array}{r@{\;}c@{\;}lr@{\;}c@{\;}l}
    M_i & = & Z_i (\sum_{k=1}^m b_k\delta_i^k), \\
    Z_{i+1} & = & Z_i + M_i, \\
    \delta_{i+1} & = & \delta_i - Z_{i+1}^*(SM_i) - (M_i^*S)Z_i, 
    \end{array}
  \right.
\end{equation}
where the soft brackets indicate the order of evaluation. Iteration
\eqref{eq:local_iter_practice} is a key component of the localized
inverse factorization proposed in this work. Its localization
properties will be discussed later in this article.

\subsection{Stability}\label{sec:stability}
If $Z_0$ were assumed to be Hermitian and to commute with $S$ and
exact arithmetics were used the order of the matrices in the matrix
products of~\eqref{eq:iter_refine_m2} and~\eqref{eq:iter_refine} would
be arbitrary. In practice numerical errors cause loss of commutativity
which for some iterations results in instabilities leading to
unbounded growth of errors. This effect has been studied in several
papers for different variants of~\eqref{eq:iter_refine_m2} and for
other matrix iterations. For example, the iterations
\begin{equation} \label{eq:iter_refine_m2_alt1}
  Z_{i+1} = Z_i + \frac{1}{2}Z_i(I-Z_iSZ_i), 
\end{equation}
and
\begin{equation} \label{eq:iter_refine_m2_alt2}
  Z_{i+1} = \frac{3}{2}Z_i - \frac{1}{2}(Z_i^3 S),  
\end{equation}
considered in for example~\cite{Branislav_2007, Philippe_1987}
and~\cite{Bini_2005, Richters_2017}, respectively, are both equivalent
to~\eqref{eq:iter_refine_m2} in exact arithmetics.
However, both \eqref{eq:iter_refine_m2_alt1} and
\eqref{eq:iter_refine_m2_alt2} are unstable unless $S$ is extremely
well conditioned~\cite{Bini_2005, Philippe_1987}.
Iteration~\eqref{eq:iter_refine_m2} has been shown to be locally stable
around $S^{-1/2}$~\cite{Branislav_2007}. Here we will first
consider the stability of~\eqref{eq:iter_refine} around any inverse
factor $Z$ and for any $m$. Then we will consider the stability of the
local refinement given by~\eqref{eq:local_iter}.

Given a matrix function $f$, the Fr{\'e}chet derivative $L_f(X;E)$ at
$X$ in direction $E$ is a mapping, linear in $E$, satisfying
\begin{equation}
  L_f(X;E) = f(X+E)-f(X) + o(\|E\|)
\end{equation}
for all $E$~\cite{book-higham}.  We follow~\cite{cheng_2001,
  book-higham, HighamMackeyMackeyTisseur_2005} and define a matrix
iteration $X_{i+1} = f(X_i)$ to be stable in a neighborhood of a fixed
point $X=f(X)$ if $L_f(X;E)$ exists at $X$ and there is a constant $c$
such that $\max_{E\neq 0}\frac{\|L_f^p(X;E)\|}{\|E\|}\leq c$ for any
$p>0$.  Here, the $p$th power $L_f^p(X;E)$ is used to denote p-fold
composition of the Fr{\'e}chet derivative in the second argument,
e.g. $L_f^2(X;E) = L_f(X;L_f(X;E))$.

Let
\begin{equation}
  g_m(Z) = Z \sum_{k=0}^m b_k(I-Z^*SZ)^k
\end{equation}
be the mapping associated with~\eqref{eq:iter_refine}.  We note that
although $Z^*SZ=I$ implies $Z = g_m(Z)$, the converse is in general not
true. For example, if $Z^*SZ=I$, then both $Z=g_2(Z)$ and $\alpha Z
=g_2(\alpha Z)$ with $\alpha = \sqrt{7/3}$ are fixed points of $g_2$,
but $(\alpha Z)^*S(\alpha Z) \neq I$.
Let $Z^*SZ=I$. Then the Fr{\'e}chet derivative at $Z$,
\begin{equation}
  L_{g_m}(Z;E) = \frac{1}{2}(E-ZE^*SZ),
\end{equation}
is idempotent, i.e.~$L_{g_m}^2(Z;E) = L_{g_m}(Z;E)$,
and thus has bounded powers.

Let
\begin{equation}
  h_m(Z, \delta) =
  \begin{bmatrix}
    Z \sum_{k=0}^m b_k\delta^k
    \\ \delta - \left(Z \sum_{k=0}^m b_k\delta^k\right)^*S\left(Z \sum_{k=1}^m b_k\delta^k\right) -
    \left(Z \sum_{k=1}^m b_k\delta^k\right)^*SZ
  \end{bmatrix}
\end{equation}
be the mapping associated with~\eqref{eq:local_iter}.
Let $Z^*SZ=I$. Then the Fr{\'e}chet derivative at $(Z,0)$ in direction $(E,F)$
\begin{equation}
  L_{h_m}(Z,0;E,F) =
  \begin{bmatrix}
    E+\frac{1}{2}ZF \\
    \frac{1}{2}(F-F^*)
  \end{bmatrix}
\end{equation}
and
\begin{equation}
  L_{h_m}^p(Z,0;E,F) =
  \begin{bmatrix}
    E+\frac{1}{2}ZF + (p-1)\frac{1}{4}Z(F-F^*) \\
    \frac{1}{2}(F-F^*)
  \end{bmatrix}.
\end{equation}
We have that $L_{h_m}(Z,0;E,F)$ is idempotent if $F$ is
Hermitian. Thus,~\eqref{eq:local_iter} is stable if non-Hermitian
perturbations of $\delta$ are disallowed, which can simply be achieved
by storing only the upper triangle of $\delta$.

The stability properties of the matrix iterations above are
demonstrated in Figure~\ref{fig:stability}. Again, we
follow~\cite{book-higham} and apply the iterations to the Wilson
matrix
\begin{equation}
  S=\begin{bmatrix}10 & 7 & 8 & 7\\ 7 & 5 & 6 & 5\\ 8 & 6 & 10 & 9\\ 7 & 5 & 9 & 10\end{bmatrix}.
\end{equation}
For the local refinement without symmetric storage of $\delta$, an
unsymmetric perturbation in $\delta_0$ causes a small drift away from
the initial inverse factor. However, the factorization error stays
small.

\begin{figure}
  \begin{center}
    \begin{subfigure}{0.48\textwidth}
    \includegraphics[width=\textwidth]{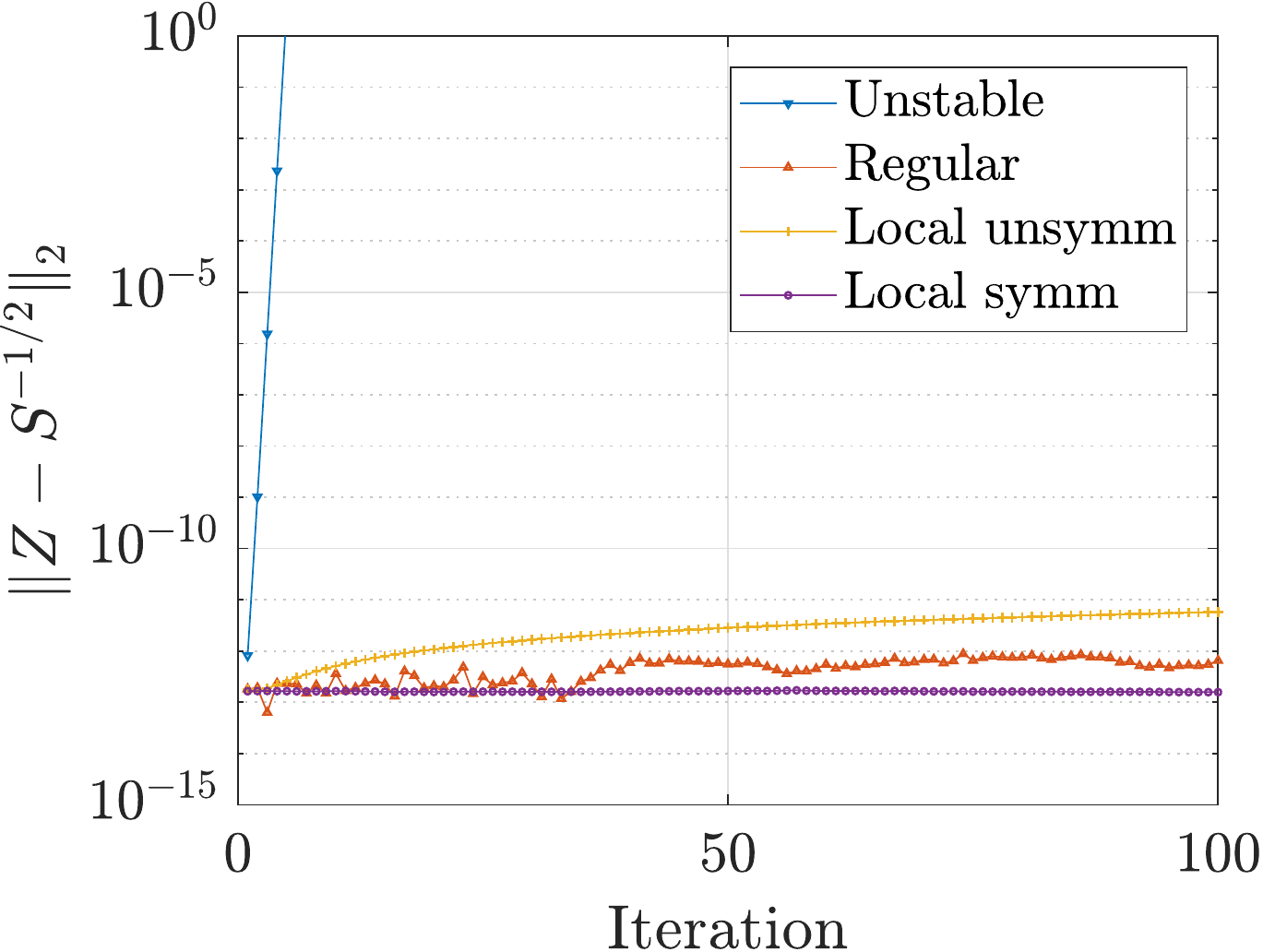}
    \end{subfigure}
    \begin{subfigure}{0.48\textwidth}
      \includegraphics[width=\textwidth]{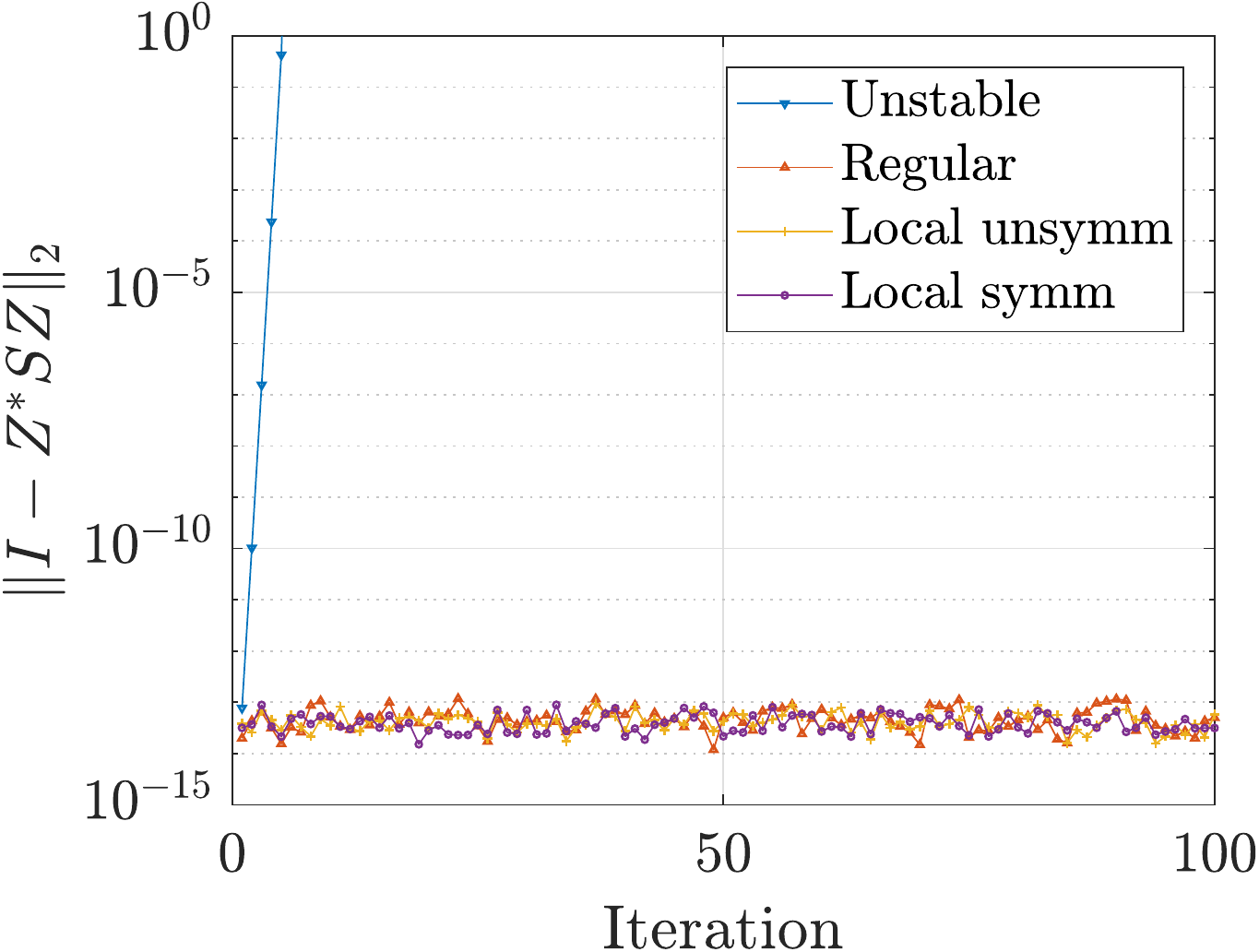}
    \end{subfigure}
  \end{center}
  \caption{Iterations \eqref{eq:iter_refine_m2_alt2}~(Unstable),
    \eqref{eq:iter_refine}~(Regular), and \eqref{eq:local_iter} with
    (Local symm) and without~(Local unsymm) symmetric storage of
    $\delta$ and $m=1$ applied to the Wilson matrix.  The iterations were
    started with $Z_0 = S^{-1/2}$ and $\delta_0 = I-Z_0^*SZ_0$
    computed in double precision arithmetics. Left panel: deviation
    from the inverse square root. Right panel: factorization
    error. \label{fig:stability} }
\end{figure}

\subsection{Stopping criterion}\label{sec:stopping}
Recently we proposed a new type of parameterless stopping criteria for
iterative methods~\cite{stop_crit_2016}. These stopping criteria were
originally used in recursive polynomial expansions to compute the
density matrix. The stopping criteria are based on the detection of a
discrepancy between theoretical and observed orders of convergence,
which takes place when numerical errors start to dominate the
calculation.  In other words, given some error measure or residual,
the theoretical worst case reduction of the error is derived. If the
error decreases any slower than this theoretical worst case reduction
the calculation has reached the stagnation phase, and it is time to
stop the iterations.

For the iterative refinement \eqref{eq:iter_refine} a worst case error
reduction is given by \eqref{eq:error_reduction}. We may therefore
formulate our stopping criterion for the iterative refinement as
\textit{stop as soon as $\|\delta_{i+1}\|_2 > \| \delta_i
  \|_2^{m+1}$}.

\subsubsection{Frobenius norm}
The spectral norm can be expensive to compute in practical
calculations.  In particular, near the iterative refinement
convergence the eigenvalues of $\delta_{i}$ may be clustered near 0,
making it difficult for an iterative eigensolver to compute the
spectral norm.  One may therefore want to use a cheaper alternative in
the stopping criterion. The computational cost of the Frobenius norm
is independent of the eigenvalue distribution and requires just one
pass over the matrix entries. The Frobenius norm is equal to
\begin{align}
  \|\delta_{i}\|_F = \sqrt{\sum_{j=1}^{n} \lambda_j^2 },
  \label{eq:frob_norm}
\end{align}
where $\{\lambda_j\}_{j=1}^n$ are the eigenvalues of $\delta_i$
ordered so that $|\lambda_j| \leq |\lambda_n| < 1$ for all $j$.

Since $\|\delta_i\|_2 \leq \|\delta_i\|_F$, we have that $\|\delta_i\|_F =
K_i \|\delta_i\|_2$ for some $K_i \geq 1$. Following the discussion
in~\cite{stop_crit_2016} we have that if
\begin{equation}
  K_{i+1} \leq K_{i}^{m+1},
  \label{eq:stopcrit_Kcond}
\end{equation}
then in exact arithmetics, using~\eqref{eq:error_reduction}, we have that
\begin{equation}
  \|\delta_{i+1}\|_F = K_{i+1}\|\delta_{i+1}\|_2 < K_i^{m+1} \|\delta_i\|_2^{m+1} = \|\delta_i\|_F^{m+1}, 
\end{equation}
and the stopping criterion for the iterative refinement can be
formulated as \textit{stop as soon as $\|\delta_{i+1}\|_F > \| \delta_i \|_F^{m+1}$}.

Now we will show that the condition~\eqref{eq:stopcrit_Kcond} is fulfilled.
Let $\beta_j = \sum_{k=m+1}^{2m+1} c_k \lambda_j^{k-1}$. Then, $\beta_j
\leq \beta_n, \ j = 1,2,\dots,n-1$ and we have
\begin{align}
  \|\delta_{i+1}\|_F & = \sqrt{\sum_{j=1}^{n} \left(\sum_{k=m+1}^{2m+1} c_k \lambda_j^k\right)^2} = \sqrt{\sum_{j=1}^{n} \beta_j^2\lambda_j^2} 
   \leq \beta_n  \sqrt{\sum_{j=1}^{n} \lambda_j^2}  = \beta_n \|\delta_i\|_F.
\end{align}
Moreover, the spectral norm can be written as
\begin{align}
  \|\delta_{i+1}\|_2 = \sum_{k=m+1}^{2m+1} c_k \lambda_n^k = \beta_n \lambda_n.
\end{align}
Then, we obtain
\begin{align}
  K_{i+1} & = \frac{\|\delta_{i+1}\|_F}{\|\delta_{i+1}\|_2}  \leq \frac{\beta_n \|\delta_i\|_F}{\beta_n\|\delta_{i}\|_2}  = K_i \leq K_i^{m+1}.
\end{align}

\section{Binary principal submatrix decomposition}\label{sec:binary_decomp}
The basic component of our recursive and localized inverse factorization algorithms is a binary principal submatrix decomposition of $S$.
Let $S$ be partitioned as
\begin{equation}
  S = \begin{bmatrix} A & B \\ B^* & C \end{bmatrix}
\end{equation}
and let $Z_A^* A Z_A = I$ and $Z_C^* C Z_C = I$ be inverse
factorizations of $A$ and $C$, respectively. 
Then, an inverse factor of $S$ can be computed using iterative
refinement with
\begin{equation} \label{eq:binary_partition_Z0}
  Z_0 = \begin{bmatrix} Z_A & 0 \\ 0 & Z_C \end{bmatrix}
\end{equation}
as starting guess, as described by Algorithm~\ref{alg:iter_refine}.
To be able to strictly bound the number of iterations in exact
arithmetics, we let Algorithm~\ref{alg:iter_refine} not make use of
the parameterless stopping criterion which relies on numerical errors
to decide when to stop. In practical calculations, the parameterless
stopping criterion is preferable and will be used in the formulation
of the full recursive and localized algorithms in
Section~\ref{sec:recursive}.

\begin{algorithm}
  \caption{Iterative refinement \label{alg:iter_refine}}
\begin{algorithmic}[1]
  \Procedure{iter-refine}{$S$, $Z_0$, $\varepsilon$}
  \State \textbf{input:} $S = \begin{bmatrix} A & B \\ B^* & C \end{bmatrix}$ and $Z_0 = \begin{bmatrix} Z_A & 0 \\ 0 & Z_C \end{bmatrix}$
  \State $\delta_{0} = I-Z_{0}^*SZ_{0}$
  \State $i = 0$
  \While{$\|\delta_{i}\|_2 > \varepsilon$}
  \State $Z_{i+1} = Z_{i} \sum_{k=0}^m b_k\delta_{i}^k$
  \State $\delta_{i+1} = I-Z_{i+1}^*SZ_{i+1}$
  \State $i = i+1$
  \EndWhile
  \State \Return $Z_{i}$
  \EndProcedure
\end{algorithmic}
\end{algorithm}

Algorithm~\ref{alg:iter_refine} as is already features localized
computations. The foregoing computation of $Z_A$ and $Z_C$ can be
performed as two separate computations without any interaction or
communication between them. The following iterative refinement employs
matrix-matrix multiplications for which implementations with good
performance usually exist, both for serial and parallel execution.
Furthermore, the matrices involved in the algorithm are typically
sparse with localized nonzero structure. If the sparse matrix-matrix
multiplications are performed using the locality-aware parallel
block-sparse matrix-matrix multiplication
of~\cite{LocalityAwareRubensson2016}, communication can be further
reduced.

We will now introduce 
modifications to Algorithm~\ref{alg:iter_refine} to further improve
its localization features and avoid both computation and communication
that is unnecessary.  Our localized refinement is given by two
modifications of Algorithm~\ref{alg:iter_refine}. Firstly, we make the
observation that
\begin{align}
  \delta_{0} & = I-Z_{0}^*SZ_{0} 
   = 
  \begin{bmatrix} 
    I - Z_A^*AZ_A & - Z_A^*BZ_C   \\ 
    -Z_C^*B^*Z_A   & I - Z_C^*CZ_C 
  \end{bmatrix} \\
  & = 
  -\begin{bmatrix} \label{eq:delta_0_new}
    0 & Z_A^*BZ_C   \\ 
    Z_C^*B^*Z_A   & 0
  \end{bmatrix}
\end{align}
and use~\eqref{eq:delta_0_new} for the computation of
$\delta_0$. Secondly, we use the local version of iterative refinement
given by~\eqref{eq:local_iter_practice}. Our localized iterative refinement is given by Algorithm~\ref{alg:local_refine}.

\begin{algorithm}
  \caption{Localized refinement \label{alg:local_refine}}
\begin{algorithmic}[1]
  \Procedure{local-refine}{$S$, $Z_0$, $\varepsilon$}
  \State \textbf{input:} $S = \begin{bmatrix} A & B \\ B^* & C \end{bmatrix}$ and $Z_0 = \begin{bmatrix} Z_A & 0 \\ 0 & Z_C \end{bmatrix}$
  \State $\delta_{0} = -\begin{bmatrix} 0 & Z_A^*BZ_C \\ Z_C^*B^*Z_A & 0 \end{bmatrix}$
  \State $i = 0$
  \While{$\|\delta_{i}\|_2 > \varepsilon$}
  \State $M_{i}  =  Z_{i} (\sum_{k=1}^m b_k\delta_{i}^k)$
  \State $Z_{i+1}  =  Z_{i} + M_{i}$
  \State $\delta_{i+1} = \delta_{i} - Z_{i+1}^*(SM_{i}) - (M_{i}^*S)Z_{i}$
  \State $i = i+1$
  \EndWhile
  \State \Return $Z_{i}$
  \EndProcedure
\end{algorithmic}
\end{algorithm}

Although Algorithms~\ref{alg:iter_refine} and~\ref{alg:local_refine}
are mathematically equivalent, their cost of execution and numerical
behavior is different. In the localized refinement the factorization
errors $I-Z_A^*AZ_A$ and $I-Z_C^*CZ_C$ are taken as zero and the
factorization error $\delta_{i+1}$ is in each iteration computed by
updating the error $\delta_{i}$ from the previous iteration. This
means that the algorithm is not capable of correcting for any initial
errors in $Z_A$ and $Z_C$ nor for any errors introduced while updating
$\delta_{i+1}$. This stands in contrast to
Algorithm~\ref{alg:iter_refine} where the factorization error is
recomputed in each iteration. Both algorithms are stable though, as
shown in Section~\ref{sec:stability}. Another drawback with
Algorithm~\ref{alg:local_refine} is that it requires more
matrix-matrix multiplications per iteration. With $m=1$,
Algorithm~\ref{alg:iter_refine} and Algorithm~\ref{alg:local_refine}
make use of~3 and 4~multiplications per iteration, respectively,
assuming that the equality $(M_i^*S) = (SM_i)^*$ is used to avoid
1~multiplication in Algorithm~\ref{alg:local_refine}. Nevertheless, a
great advantage of Algorithm~\ref{alg:local_refine} is its
localization properties. Although both algorithms feature localized
computations in some sense, we will see in
Section~\ref{sec:localization} that the localized refinement is
superior for large systems with localization in $S$.

It was shown in~\cite{rubenssonBockHolmstromNiklasson} that with the
starting guess given by~\eqref{eq:binary_partition_Z0}, we always get
an initial factorization error $\|\delta_0\|_2 < 1$ and convergence of
the iterative refinement, regardless of what inverse factors $Z_A$ and
$Z_C$ that are used. The following theorem is a strengthening of this
result giving quantitative insight into how the convergence depends on
the eigenvalues or condition number of $S$.

\begin{theorem}\label{thm:recinvfact}
  Let $S$ be a Hermitian positive definite $n \times n$ matrix partitioned as
\begin{align}
  S = \begin{bmatrix} A & B \\ B^* & C \end{bmatrix}
\end{align}
where $A$ is $n_A \times n_A$ with $1 \leq n_A < n$.  Let
$Z_A^* A Z_A = I$ and $Z_C^* C Z_C = I$ be inverse factorizations of
$A$ and $C$ and let
\begin{equation}
  Z_0 = \begin{bmatrix} Z_A & 0 \\ 0 & Z_C \end{bmatrix}.
\end{equation}
Then,
\begin{align}
&\lambda_{\mathrm{min}}(S) \leq \lambda_i(A) \leq \lambda_{\mathrm{max}}(S), &i &= 1, \dots, n_A, \label{eq:thm_rif_1}\\
&\lambda_{\mathrm{min}}(S) \leq \lambda_i(C) \leq \lambda_{\mathrm{max}}(S), &i &= 1, \dots, n-n_A, \label{eq:thm_rif_2}\\
&\frac{\lambda_{\mathrm{min}}(S)}{\lambda_{\mathrm{max}}(S)}-1 \leq \lambda_i(I-Z_0^*SZ_0) \leq 1-\frac{\lambda_{\mathrm{min}}(S)}{\lambda_{\mathrm{max}}(S)}, &i &= 1, \dots, n. \label{eq:thm_rif_3}
\end{align}
\end{theorem}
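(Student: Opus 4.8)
The plan is to prove the three claims in order, treating \eqref{eq:thm_rif_1}--\eqref{eq:thm_rif_2} as a quick preliminary and \eqref{eq:thm_rif_3} as the main content.

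For \eqref{eq:thm_rif_1}, I would use the Rayleigh quotient (Courant--Fischer) characterization of the extreme eigenvalues together with the fact that $A$ is the leading principal submatrix of $S$: for nonzero $x \in \mathbb{C}^{n_A}$ the padded vector $\tilde x = \bigl[\begin{smallmatrix} x \\ 0 \end{smallmatrix}\bigr] \in \mathbb{C}^n$ satisfies $x^*Ax = \tilde x^*S\tilde x$ and $\|x\|_2 = \|\tilde x\|_2$, so every Rayleigh quotient of $A$ equals one of $S$; hence $\lambda_{\min}(S) \le \lambda_{\min}(A)$ and $\lambda_{\max}(A) \le \lambda_{\max}(S)$, and since each $\lambda_i(A)$ lies between $\lambda_{\min}(A)$ and $\lambda_{\max}(A)$, \eqref{eq:thm_rif_1} follows. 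The same argument with zeros padded in the leading positions gives \eqref{eq:thm_rif_2}. This is essentially Cauchy interlacing, so a reference to \cite{book-matrix-analysis} would suffice in place of the explicit argument.

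For \eqref{eq:thm_rif_3}, I would first record two structural facts. Since $Z_A^*AZ_A = I$ with $Z_A$ square, $Z_A$ is nonsingular and $Z_AZ_A^* = A^{-1}$, and likewise $Z_CZ_C^* = C^{-1}$; hence $Z_0Z_0^* = \operatorname{diag}(A^{-1},C^{-1})$, which has the same eigenvalues as $Z_0^*Z_0$, and by \eqref{eq:thm_rif_1}--\eqref{eq:thm_rif_2} we get $\lambda_{\min}(Z_0^*Z_0) = 1/\max(\lambda_{\max}(A),\lambda_{\max}(C)) \ge 1/\lambda_{\max}(S)$. Second, the block computation preceding the theorem (the same as \eqref{eq:delta_0_new}) gives $Z_0^*SZ_0 = \bigl[\begin{smallmatrix} I & W \\ W^* & I \end{smallmatrix}\bigr]$ with $W = Z_A^*BZ_C$, so $I - Z_0^*SZ_0$ is block-antidiagonal; its spectrum is $\{\pm\sigma_j(W)\}$ (together with zeros when $n_A \neq n-n_A$), hence symmetric about $0$ with spectral radius $\|W\|_2$. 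Consequently \eqref{eq:thm_rif_3} is equivalent to the single inequality $\|W\|_2 \le 1 - \lambda_{\min}(S)/\lambda_{\max}(S)$, because symmetry of the spectrum automatically converts the upper bound into the matching lower bound.

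It then remains to prove that one inequality. Because $Z_0^*SZ_0$ is positive definite (congruence of a positive definite matrix by a nonsingular matrix), its eigenvalues $1 \pm \sigma_j(W)$ are positive, so $\|W\|_2 < 1$ and $\lambda_{\min}(Z_0^*SZ_0) = 1 - \|W\|_2$, i.e. $\|W\|_2 = 1 - \lambda_{\min}(Z_0^*SZ_0)$. Finally, for any unit vector $v$,
\begin{equation*}
  v^*Z_0^*SZ_0 v = (Z_0v)^*S(Z_0v) \ge \lambda_{\min}(S)\,\|Z_0v\|_2^2 \ge \lambda_{\min}(S)\,\lambda_{\min}(Z_0^*Z_0) \ge \frac{\lambda_{\min}(S)}{\lambda_{\max}(S)},
\end{equation*}
so $\lambda_{\min}(Z_0^*SZ_0) \ge \lambda_{\min}(S)/\lambda_{\max}(S)$, whence $\|W\|_2 \le 1 - \lambda_{\min}(S)/\lambda_{\max}(S)$ and \eqref{eq:thm_rif_3} follows. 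I do not expect a genuine obstacle here; the only point requiring care is the bookkeeping in the previous paragraph — namely that the block-antidiagonal structure of $I - Z_0^*SZ_0$ forces $\pm$ symmetry of its eigenvalues, so that a one-sided lower bound on $\lambda_{\min}(Z_0^*SZ_0)$ is enough to pin down the full symmetric interval and no separate upper-bound computation is needed.
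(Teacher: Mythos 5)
Your proof is correct and follows essentially the same route as the paper: Cauchy interlacing for \eqref{eq:thm_rif_1}--\eqref{eq:thm_rif_2}, the identity $Z_0Z_0^* = \operatorname{diag}(A^{-1},C^{-1})$ to bound $\lambda_{\min}(Z_0^*SZ_0)$ from below by $\lambda_{\min}(S)/\lambda_{\max}(S)$, and the $\pm$ symmetry of the Jordan--Wielandt spectrum of $I-Z_0^*SZ_0$ to get the lower bound from the upper one. The only difference is cosmetic: where the paper cites Ostrowski's theorem for $\lambda_{\min}(Z_0^*SZ_0) \ge \lambda_{\min}(S)\lambda_{\min}(Z_0Z_0^*)$, you prove that special case directly with a Rayleigh-quotient computation.
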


Theorem~\ref{thm:recinvfact} implies $\|\delta_0\|_2 < 1$ and
convergence of the iterative refinement in
Algorithms~\ref{alg:iter_refine} and~\ref{alg:local_refine}. It follows
from~\eqref{eq:no-of-iters} and~\eqref{eq:thm_rif_3} that for a given
level of accuracy $\|\delta_{i}\|_2 < \varepsilon$ the number of
iterations is bounded by
\begin{equation}\label{eq:iter_refine_no_of_iters}
  k = \left\lceil\frac{\log{\left(\frac{\log \varepsilon}{\log \left(1-{\lambda_{\mathrm{min}}(S)}/{\lambda_{\mathrm{max}}(S)}\right)}\right)}}{\log{(m+1)}}\right\rceil.
\end{equation}

\begin{lemma}\label{thm:ostrowski}
  Let $A$ and $B$ be positive definite Hermitian $n \times n$
  matrices. Then $AB$ has real eigenvalues and
  \begin{equation}
    \lambda_{\mathrm{min}}(A) \lambda_i(B) \leq \lambda_i(AB) \leq \lambda_{\mathrm{max}}(A) \lambda_i(B), \quad i = 1,\dots,n.
  \end{equation}  
\end{lemma}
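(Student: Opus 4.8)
The plan is to reduce the statement about the (a priori complex) eigenvalues of the product $AB$ to a statement about a Hermitian matrix, and then to apply the Courant--Fischer min-max characterization together with a change of variables. Since $A$ is Hermitian positive definite, its positive definite square root $A^{1/2}$ exists and is invertible, and
\begin{equation*}
  A^{-1/2}(AB)A^{1/2} = A^{1/2}BA^{1/2}.
\end{equation*}
Thus $AB$ is similar to the Hermitian matrix $A^{1/2}BA^{1/2}$, which moreover is positive definite by congruence (cf.~\cite[Theorem~4.5.8]{book-matrix-analysis}). In particular $AB$ has real eigenvalues, and if the spectra are ordered consistently (say, nondecreasingly) then $\lambda_i(AB) = \lambda_i(A^{1/2}BA^{1/2})$. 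It therefore suffices to bound $\lambda_i(A^{1/2}BA^{1/2})$ in terms of $\lambda_i(B)$.

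For the second step I would invoke the min-max theorem, writing
\begin{equation*}
  \lambda_i(A^{1/2}BA^{1/2}) = \min_{\dim V = n-i+1}\ \max_{0\neq x\in V}\ \frac{x^*A^{1/2}BA^{1/2}x}{x^*x},
\end{equation*}
and then substituting $y = A^{1/2}x$, so that $x^*x = y^*A^{-1}y$ and
\begin{equation*}
  \frac{x^*A^{1/2}BA^{1/2}x}{x^*x} = \frac{y^*By}{y^*y}\cdot\frac{y^*y}{y^*A^{-1}y}.
\end{equation*}
Since $\lambda_{\mathrm{min}}(A) \le y^*y/(y^*A^{-1}y) \le \lambda_{\mathrm{max}}(A)$ for every $y\neq 0$, this Rayleigh quotient is squeezed pointwise between $\lambda_{\mathrm{min}}(A)\,y^*By/(y^*y)$ and $\lambda_{\mathrm{max}}(A)\,y^*By/(y^*y)$. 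Because $A^{1/2}$ is invertible, $V\mapsto A^{1/2}V$ is a bijection on the set of $(n-i+1)$-dimensional subspaces, so carrying the pointwise squeeze through the $\max$ over $x\in V$ and the subsequent $\min$ over $V$ transfers it directly to the corresponding optimization in the variable $y$, yielding
\begin{equation*}
  \lambda_{\mathrm{min}}(A)\,\lambda_i(B) \le \lambda_i(A^{1/2}BA^{1/2}) \le \lambda_{\mathrm{max}}(A)\,\lambda_i(B),
\end{equation*}
which together with the first step is the claim.

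The only delicate point is the bookkeeping in this last transfer: one must use that $\lambda_{\mathrm{min}}(A)$ and $\lambda_{\mathrm{max}}(A)$ are positive constants that factor out of the nested optimizations, that the subspace correspondence is dimension-preserving and bijective, and that the eigenvalue ordering conventions for $A^{1/2}BA^{1/2}$, $B$, and $AB$ are kept consistent. No genuine obstacle is expected here; indeed the result is essentially Ostrowski's theorem (cf.~\cite[Theorem~4.5.9]{book-matrix-analysis}), so one could alternatively just cite it, but the argument above is short and self-contained.
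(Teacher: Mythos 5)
Your argument is correct, but it takes a different route from the paper only in the sense that the paper does not prove the lemma at all: its entire proof is the one-line observation that the statement is a specialization of a theorem of Ostrowski (Satz~1 of the cited 1959 paper), i.e.\ exactly the citation you mention as an alternative at the end. What you have written out is essentially the standard proof of that theorem: similarity of $AB$ to the Hermitian positive definite matrix $A^{1/2}BA^{1/2}$, followed by Courant--Fischer with the substitution $y = A^{1/2}x$ and the pointwise squeeze $\lambda_{\mathrm{min}}(A) \le y^*y/(y^*A^{-1}y) \le \lambda_{\mathrm{max}}(A)$. The only blemish is a bookkeeping inconsistency: you declare the eigenvalues ordered nondecreasingly but then write $\lambda_i = \min_{\dim V = n-i+1}\max_{x\in V}(\cdot)$, which is the variational characterization for the \emph{nonincreasing} convention (with nondecreasing order it yields $\lambda_{n-i+1}$). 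Since the final inequality is invariant under relabeling as long as $B$, $A^{1/2}BA^{1/2}$, and $AB$ are ordered by the same convention, this does not affect correctness, but you should fix the convention to match the formula. Your self-contained version buys independence from the (somewhat inaccessible, German-language) reference at the cost of half a page; the paper's version buys brevity at the cost of asking the reader to trust or track down Ostrowski.
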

\begin{proof}
  Lemma~\ref{thm:ostrowski} is a specialization of a theorem due to
  Ostrowski~\cite[Satz~1]{VonOstrowski_1959}.
\end{proof}
\begin{proof}[Proof of Theorem~\ref{thm:recinvfact}]
  The inequalities~\eqref{eq:thm_rif_1} and~\eqref{eq:thm_rif_2}
  follow directly from Cauchy's interlacing
  theorem, see e.g.~\cite{book-matrix-analysis}. 
  Recall that $SZ_0Z_0^*$ has the same eigenvalues as $Z_0^*SZ_0$ and invoke
  Lemma~\ref{thm:ostrowski} with $A = S$ and $B = Z_0Z_0^*$. This gives
  \begin{align}
    \lambda(Z_0^*SZ_0) & \geq \lambda_{\mathrm{min}}(S)\lambda_{\mathrm{min}}(Z_0Z_0^*) \\
    & = \lambda_{\mathrm{min}}(S)\min\left(\lambda_{\mathrm{min}}(A^{-1}), \lambda_{\mathrm{min}}(C^{-1})\right) \\
    & = \lambda_{\mathrm{min}}(S)\min\left(\frac{1}{\lambda_{\mathrm{max}}(A)}, \frac{1}{\lambda_{\mathrm{max}}(C)}\right) \\
    & \geq \lambda_{\mathrm{min}}(S)\frac{1}{\lambda_{\mathrm{max}}(S)}
  \end{align}
  where we used that $Z_0Z_0^* = \begin{bmatrix} A^{-1} & 0 \\ 0 &
    C^{-1} \end{bmatrix}$ and again Cauchy's interlacing theorem. This
  gives us the upper bound in~\eqref{eq:thm_rif_3}. The lower bound follows from the fact that $I-Z_0^*SZ_0$ is a so-called Jordan-Wielandt matrix with positive-negative eigenvalue pairs, see e.g.~\cite{book-stewart-sun}.
\end{proof}

Before we present our complete localized inverse factorization
algorithm we will theoretically investigate localization properties of
Algorithms~\ref{alg:iter_refine} and~\ref{alg:local_refine}.  In
particular, we will under certain assumptions show that the localized
refinement involves only operations on matrices with a number of
significant entries proportional to the size of the cut that defines
the principal submatrix decomposition.

\section{Localization}\label{sec:localization}
Let $d(i,j)$ be a pseudometric on the index set $I_S = \{1\dots n\}$ of $S$,
i.e.~a distance function such that $d(i,j) \geq 0$, $d(i,j) = d(j,i)$, $d(i,i) = 0$, and $d(i,j)\leq d(i,k)+d(k,j)$ hold for
all $i$, $j$, and $k$.
Let $N_d(i,R) = \{j:d(i,j) < R\}$ be the set of vertices within
distance $R$ from $i$ and let $|N_d(i,R)|$ denote its cardinality.
In case $S$ is a basis set overlap matrix for a basis set with atom
centered basis functions, the vertices (indices) correspond to basis
function centers and $d(i,j)$ may be naturally taken as the Euclidean
distance between the centers corresponding to $i$ and $j$.

\subsection{Exponential decay with distance between vertices}
We will say that an $n \times n$ matrix $S$, with associated distance
function $d(i,j)$, has the property of \emph{exponential decay with
  respect to distance between vertices with constants $c$ and
  $\alpha$} if
\begin{equation}\label{eq:exp_decay_single_matrix}
  |S_{ij}| \leq ce^{-\alpha d(i,j)} 
\end{equation}
for all $i,j=1,\dots,n$ with $c>0$ and $\alpha>0$.  We shall in
particular be concerned with sequences of matrices $\{S_n\}$ with
associated distance functions $\{d_n(i,j)\}$ that satisfy exponential
decay with respect to distance between vertices
\eqref{eq:exp_decay_single_matrix} with constants $c$ and $\alpha$
independent of $n$.
\begin{theorem}\label{thm:on_entries_with_expdecay}
Let $\{S_n\}$ be a sequence of $n \times n$ matrices with associated
distance functions $\{d_n(i,j)\}$ and assume that each $S_n$ satisfies the 
exponential decay property
\begin{equation}\label{eq:thm4_exp_decay}
  |[S_n]_{ij}| \leq ce^{-\alpha d_n(i,j)} 
\end{equation}
for all $i,j=1,\dots,n$ with constants $c>0$ and $\alpha>0$
independent of $n$.
Assume that there
are constants $\gamma>0$ and $\beta > 0$ independent of $n$ such that
\begin{equation}\label{eq:bound_n_neighbor}
  |N_{d_n}(i,R)| \leq \gamma R^\beta
\end{equation}
holds for all $i$, for any $R \geq 0$. Then, for any given $\varepsilon > 0$, 
each $S_n$ contains at most $O(n)$ entries greater
than $\varepsilon$ in magnitude.
Also, the number of entries greater than $\varepsilon$ in magnitude in each
row and column of each $S_n$ is bounded by a constant independent of $n$.
\end{theorem}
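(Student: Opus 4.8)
The plan is to use the exponential decay estimate \eqref{eq:thm4_exp_decay} to translate a threshold on the \emph{magnitude} of a matrix entry into a threshold on the \emph{distance} between the two associated vertices, and then to apply the polynomial neighbor-growth bound \eqref{eq:bound_n_neighbor} one row (or column) at a time. First I would dispose of the degenerate regime $\varepsilon \geq c$: since $d_n(i,j) \geq 0$, \eqref{eq:thm4_exp_decay} gives $|[S_n]_{ij}| \leq c e^{-\alpha d_n(i,j)} \leq c \leq \varepsilon$ for all $i,j$, so no entry of any $S_n$ exceeds $\varepsilon$ in magnitude and both assertions hold trivially. So from now on assume $0 < \varepsilon < c$ and set
\[
  R_\varepsilon = \frac{1}{\alpha}\log\frac{c}{\varepsilon},
\]
which is strictly positive and depends only on $c$, $\alpha$, $\varepsilon$; in particular it does not depend on $n$.

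Next, fix $n$ and a row index $i$. I claim that $|[S_n]_{ij}| > \varepsilon$ forces $j \in N_{d_n}(i,R_\varepsilon)$: from \eqref{eq:thm4_exp_decay} we get $c e^{-\alpha d_n(i,j)} \geq |[S_n]_{ij}| > \varepsilon$, hence $e^{-\alpha d_n(i,j)} > \varepsilon/c$, hence $d_n(i,j) < \tfrac{1}{\alpha}\log(c/\varepsilon) = R_\varepsilon$. Therefore the number of entries in row $i$ of $S_n$ with magnitude exceeding $\varepsilon$ is at most $|N_{d_n}(i,R_\varepsilon)|$, which by \eqref{eq:bound_n_neighbor} is at most $\gamma R_\varepsilon^{\,\beta}$ — a constant independent of $n$ and of $i$. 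The argument used nothing specific to rows, so the identical bound applies to each column. This establishes the second assertion.

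Finally, summing the per-row bound over all $n$ rows shows that the total number of entries of $S_n$ exceeding $\varepsilon$ in magnitude is at most $n\,\gamma R_\varepsilon^{\,\beta}$, i.e.\ $O(n)$ with implied constant $\gamma R_\varepsilon^{\,\beta}$ independent of $n$; this is the first assertion. There is no genuinely hard step in this argument; the only point that needs a little care is to notice that the distance cutoff $R_\varepsilon$ produced by the magnitude threshold is a \emph{fixed} number, so that \eqref{eq:bound_n_neighbor} is invoked with an $R$ that does not grow with $n$ — together with treating the trivial case $\varepsilon \geq c$ (equivalently $R_\varepsilon \leq 0$) separately so that $R_\varepsilon^{\,\beta}$ is well defined.
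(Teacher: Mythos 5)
Your proof is correct and follows essentially the same route as the paper's: convert the magnitude threshold into a fixed distance cutoff $R_\varepsilon = \tfrac{1}{\alpha}\log(c/\varepsilon)$ via the decay bound, invoke \eqref{eq:bound_n_neighbor} to bound each row and column by the constant $\gamma R_\varepsilon^{\beta}$, and sum over rows to get $O(n)$. Your explicit treatment of the degenerate case $\varepsilon \geq c$ is a minor extra care the paper omits, but the argument is otherwise identical.
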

\begin{proof}
  For any matrix entry $[S_n]_{ij}$ with magnitude greater than $\varepsilon$,
  \eqref{eq:thm4_exp_decay} implies
  \begin{equation}
    d_n(i,j) < \frac{1}{\alpha}\ln \left(\frac{c}{\varepsilon}\right)
  \end{equation}
  which is a constant independent of $n$. For each vertex $i$ the
  number of vertices within a constant distance is, due
  to~\eqref{eq:bound_n_neighbor}, bounded by a constant independent of
  $n$.  For any row or column in the matrix the number of entries
  larger than $\varepsilon$ is therefore bounded by a constant and the
  total number of entries satisfying $|[S_n]_{ij}| > \varepsilon$
  cannot exceed $O(n)$.
\end{proof}

\begin{theorem}\label{thm:exp_decay_product} 
  Let $\{A_n\}$ and $\{B_n\}$ be sequences of $n \times n$ matrices
  with a common associated distance function $d_n(i,j)$ for each $n$.
  Assume that
  \begin{align}
    |[A_n]_{ij}| &\leq c_1e^{-\alpha d_n(i,j)}, \\
    |[B_n]_{ij}| &\leq c_2e^{-\alpha d_n(i,j)} 
  \end{align}
  for all $i,j$ where $c_1$, $c_2$, and $\alpha$ are positive and
  independent of $n$.  Assume that there are
  constants $\gamma>0$ and $\beta > 0$ independent of $n$ such that
  \begin{equation} \label{eq:bound_n_neighbor_B}
    |N_{d_n}(i,R)| \leq \gamma R^\beta
  \end{equation}
  holds for all $i$, for any $R \geq 0$. Then, the entries of $C_n = A_nB_n$ satisfy
  \begin{equation}
    |[C_n]_{ij}| \leq ce^{-\alpha' d_n(i,j)} \textrm{ for all } i,j
  \end{equation}
  for any $\alpha'$ such that $0<\alpha'<\alpha$ with $c$ independent of $n$.
\end{theorem}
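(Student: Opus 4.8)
The plan is to bound the entries of $C_n = A_n B_n$ by splitting the sum $[C_n]_{ij} = \sum_k [A_n]_{ik}[B_n]_{kj}$ according to the distance of the intermediate vertex $k$ from $i$ and $j$. First I would use the triangle inequality for $d_n$ to note that for every $k$ we have $d_n(i,k) + d_n(k,j) \geq d_n(i,j)$, so that $|[A_n]_{ik}[B_n]_{kj}| \leq c_1 c_2 e^{-\alpha(d_n(i,k)+d_n(k,j))} \leq c_1 c_2 e^{-\alpha' d_n(i,j)} e^{-(\alpha-\alpha')(d_n(i,k)+d_n(k,j))}$ for any $\alpha'$ with $0 < \alpha' < \alpha$. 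Summing over $k$ then gives
\begin{equation}
  |[C_n]_{ij}| \leq c_1 c_2 e^{-\alpha' d_n(i,j)} \sum_{k=1}^n e^{-(\alpha-\alpha')(d_n(i,k)+d_n(k,j))} \leq c_1 c_2 e^{-\alpha' d_n(i,j)} \sum_{k=1}^n e^{-(\alpha-\alpha') d_n(i,k)},
\end{equation}
so it remains to show that $\sum_k e^{-(\alpha-\alpha') d_n(i,k)}$ is bounded by a constant independent of $n$ and $i$.

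The key step is therefore the estimate of this ``single-vertex'' sum using the polynomial growth bound~\eqref{eq:bound_n_neighbor_B}. I would organize the sum over dyadic (or unit-width) shells $S_\ell = \{k : \ell \leq d_n(i,k) < \ell+1\}$ for $\ell = 0, 1, 2, \dots$. By~\eqref{eq:bound_n_neighbor_B}, $|S_\ell| \leq |N_{d_n}(i,\ell+1)| \leq \gamma(\ell+1)^\beta$, and on $S_\ell$ each term is at most $e^{-(\alpha-\alpha')\ell}$. Hence
\begin{equation}
  \sum_{k=1}^n e^{-(\alpha-\alpha') d_n(i,k)} \leq \sum_{\ell=0}^\infty \gamma(\ell+1)^\beta e^{-(\alpha-\alpha')\ell},
\end{equation}
and since $\alpha - \alpha' > 0$ this series converges (by the ratio test, or by comparison with $\int_0^\infty (t+1)^\beta e^{-(\alpha-\alpha')t}\,dt$) to a finite constant depending only on $\gamma$, $\beta$, $\alpha$, and $\alpha'$, none of which depend on $n$. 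Absorbing $c_1 c_2$ and this constant into a single constant $c$ yields $|[C_n]_{ij}| \leq c\,e^{-\alpha' d_n(i,j)}$ for all $i,j$, as claimed.

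The main obstacle is purely a matter of bookkeeping: making sure the shell decomposition is set up so that~\eqref{eq:bound_n_neighbor_B} can be applied cleanly (one must count vertices in an annulus, not a ball, so the bound $|S_\ell| \leq \gamma(\ell+1)^\beta$ is used rather than something sharper, which is harmless), and confirming that the geometric-times-polynomial series converges with a bound uniform in $n$ and in the base vertex $i$. Everything else is a direct consequence of the triangle inequality for the pseudometric and the elementary splitting $\alpha = \alpha' + (\alpha - \alpha')$, which trades a slightly slower decay rate $\alpha' < \alpha$ for the summability needed to control the matrix product.
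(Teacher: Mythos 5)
Your proposal is correct and follows essentially the same route as the paper's proof: the splitting $\alpha=\alpha'+(\alpha-\alpha')$ combined with the triangle inequality to extract the factor $e^{-\alpha' d_n(i,j)}$, followed by a shell decomposition using the polynomial neighborhood bound to show the remaining single-vertex sum converges to a constant independent of $n$. The only cosmetic difference is that the paper retains the surplus decay $e^{-(\alpha-\alpha')d_n(k,j)}$ and sums shells around $j$, whereas you retain $e^{-(\alpha-\alpha')d_n(i,k)}$ and sum shells around $i$; the two are symmetric and equivalent.
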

\begin{proof}
  Let $\omega = \alpha-\alpha'$ and note that $\alpha' < \alpha$. Then,
  \begin{align}
    |[A_n]_{ij}| &\leq c_1e^{-\alpha' d_n(i,j)}, \\
    |[B_n]_{ij}| &\leq c_2e^{-(\omega+\alpha') d_n(i,j)}
  \end{align}
  which gives 
  \begin{align}
    |[C_n]_{ij}| &\leq \sum_{k=1}^n |[A_n]_{ik}||[B_n]_{kj}| \\
    &\leq 
    c_1c_2\sum_{k=1}^n e^{-\alpha' (d_n(i,k)+d_n(k,j))} e^{-\omega d_n(k,j)}\\
    &\leq c_1c_2\sum_{k=1}^n e^{-\omega d_n(k,j)}e^{-\alpha' d_n(i,j)}.
  \end{align}
  So far we have essentially followed the proof of Theorem~9.2
  in~\cite{benzi-decay}.  It remains to show that the sum
  $\sum_{k=1}^n e^{-\omega d_n(k,j)}$ is bounded by a constant
  independent of $n$.  Grouping the summands with respect to distance
  from vertex $j$
  gives
  \begin{align}
    \sum_{k=1}^n e^{-\omega d_n(k,j)} &\leq \sum_{r=1}^\infty (|N_{d_n}(j,r)|-|N_{d_n}(j,r-1)|)e^{-\omega (r-1)}\\
    &\leq \sum_{r=1}^\infty |N_{d_n}(j,r)|e^{-\omega (r-1)}\\
    &\leq \sum_{r=1}^\infty \gamma r^\beta e^{-\omega (r-1)}.\label{eq:bound_for_mult_decay}
  \end{align}
  Note that $|N_{d_n}(j,r)|-|N_{d_n}(j,r-1)|$ is the number of
  vertices located at a distance $r-1 \leq d_n(k,j) < r$ from vertex
  $j$.
  The sum~\eqref{eq:bound_for_mult_decay} is independent of $n$ and
  can be shown to converge using the ratio test~\cite[p.~66]{book-rudin}.
 \end{proof}

As already noted, the results of this subsection are closely related
to results previously presented for example in~\cite{benzi-decay}. See
in particular Proposition~6.4 and Theorem~9.2 of~\cite{benzi-decay}. A
key difference, however, is that in~\cite{benzi-decay} the distance
function or metric on the index set $I_S$ is assumed to be the
geodesic distance function of a graph defined over $I_S$.
In this sense the present formulation, where any pseudometric may be
used, is more general.
However, in~\cite{benzi-decay} a less restrictive condition for the
number of neighbors of any node is used, in comparison with
\eqref{eq:bound_n_neighbor} and
\eqref{eq:bound_n_neighbor_B}. In~\cite{benzi-decay} it is assumed
that the maximum degree of the graph, i.e.\ the largest number of
immediate neighbors of any vertex, is uniformly bounded with respect
to $n$.
To see that this is a less restrictive condition, consider for example
the graph given by a binary tree with maximum degree 3. For any
constants $\gamma$ and $\beta$ there exist $n$, $R$, and $i$ such that
$|N_{d_n}(i,R)| > \gamma R^\beta$ since $\max_i|N_{d_n}(i,R)|$ grows
exponentially with $R$ for large enough $n$, so that
\eqref{eq:bound_n_neighbor} and \eqref{eq:bound_n_neighbor_B} are
violated.
However, in calculations with the pseudometric taken as Euclidean
distance between atom centered basis functions this is not an issue
since the number of basis function centers within a certain distance
$R$ will never exceed $O(R^3)$.

\subsection{Exponential decay away from cut}
We will here consider matrices with the property of exponential decay
away from a set of indices $I\subseteq \{1,\dots,n\}$. The decay may be with respect to row index
\begin{equation}
  |S_{ij}| \leq ce^{-\alpha \min_{k\in I}d(i,k)} \textrm{ for all } i,j=1,\dots,n,  
\end{equation}
column index 
\begin{equation}
  |S_{ij}| \leq ce^{-\alpha \min_{k\in I}d(j,k)} \textrm{ for all } i,j=1,\dots,n,  
\end{equation}
or both. We are in particular interested in binary partitions of the
index set, i.e.~$I_A\subset \{1,\dots,n\}$ and $I_C =
\{1,\dots,n\}\setminus I_A$, corresponding to a binary principal
submatrix partition $S = \begin{bmatrix} A & B \\ D &
  C \end{bmatrix}$. For such partitions we define the distance to the
cut
\begin{equation}\label{eq:cut_dist_single_matrix}
  d^\mathrm{cut}(i) = \min_{k\in I_A} d(i,k) + \min_{k\in I_C} d(i,k).
\end{equation}
Note that for $i \in I_A$ the first term on the right hand side will
be zero and the distance to the cut is thus defined as the distance to
the closest vertex in $I_C$, and vice versa for $i \in I_C$. This is
illustrated in Figure~\ref{fig:cut_distance}.  We will say that $S$
has the property of \emph{exponential decay away from the cut with
  constants $c$ and $\alpha$} if
\begin{equation} \label{eq:exp_decay_cut}
  |S_{ij}| \leq ce^{-\alpha \max(d^\mathrm{cut}(i),d^\mathrm{cut}(j))}
\end{equation}
for all $i,j=1,\dots,n$ with $c>0$ and $\alpha>0$.
We note that this is equivalent to the elements of $S$ satisfying
the four conditions
\begin{align}
  |S_{ij}| & \leq ce^{-\alpha \min_{k\in I_A}d(i,k)}, \quad 
  & |S_{ij}| & \leq ce^{-\alpha \min_{k\in I_C}d(i,k)}, \nonumber \\ 
  |S_{ij}| & \leq ce^{-\alpha \min_{k\in I_A}d(j,k)}, \textrm{ and }\quad 
  & |S_{ij}| & \leq ce^{-\alpha \min_{k\in I_C}d(j,k)} \label{eq:cut_decay_equiv}
\end{align}
for all $i,j=1,\dots,n$.
We shall in particular be concerned with sequences of matrices
$\{S_n\}$ where each matrix $S_n$ is associated with a distance
function $d_n(i,j)$, a binary partition of its index set,
i.e.~$I_{A_n}\subset \{1,\dots,n\}$ and $I_{C_n} =
\{1,\dots,n\}\setminus I_{A_n}$, and 
the distance to the cut, defined as in \eqref{eq:cut_dist_single_matrix},
\begin{equation} \label{eq:cut_dist_sequence}
  d_n^\mathrm{cut}(i) = \min_{k\in I_{A_n}} d_n(i,k) + \min_{k\in I_{C_n}} d_n(i,k).
\end{equation}

\begin{figure}
  \begin{center}
    \includegraphics[scale=1.04]{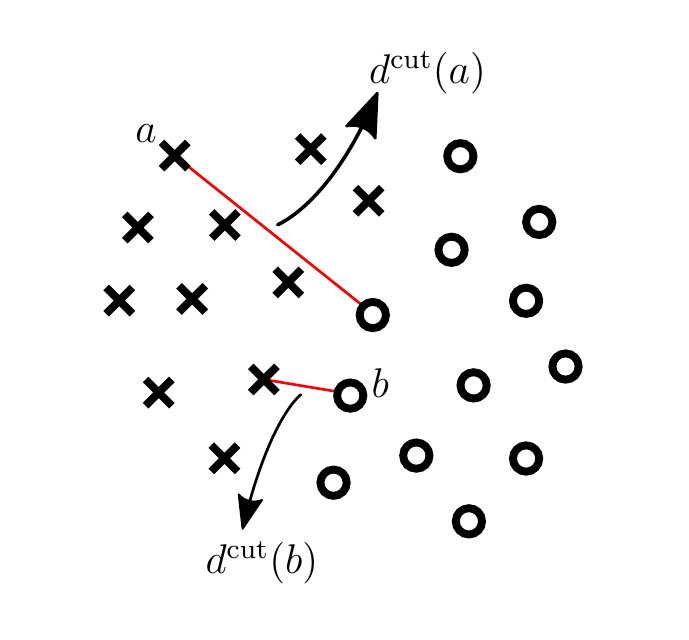}
  \end{center}
  \caption{Illustration of distance to cut. The vertex (index) set
    $\{1,\dots,n\}$ is partitioned in two subsets $I_A$ (crosses) and
    $I_C$ (circles).  For a vertex $a$ in $I_A$, the distance to the
    cut $d^\mathrm{cut}(a)$ is defined as the distance to the closest
    vertex in $I_C$. For a vertex $b$ in $I_C$, the distance to the
    cut $d^\mathrm{cut}(b)$ is defined as the distance to the closest
    vertex in $I_A$. \label{fig:cut_distance} }
\end{figure}

\begin{theorem}\label{thm:k_entries_with_cutdecay}
  Let $\{S_n\}$ be a sequence of $n\times n$ matrices satisfying the
  assumptions of Theorem~\ref{thm:on_entries_with_expdecay}. Associate
  with each $S_n$ a binary partition of its index set and let
  $d_n^\mathrm{cut}(i)$ be defined as in \eqref{eq:cut_dist_sequence}.
  Assume furthermore that for each distance $R$, there are constants
  $\gamma>0$ and $\beta>0$ independent of $n$ and a function $p(n)$
  such that
  \begin{equation}\label{eq:entries_within_R_from_cut}
    |\{i:d_n^\mathrm{cut}(i) < R \}| < \gamma R^\beta p(n),
  \end{equation}
  i.e.~the number of vertices within distance $R$ from the cut is
  bounded by $\gamma R^\beta p(n)$.
  Assume also that at least one of
  \begin{equation} \label{thm:decay_from_cut}
    |[S_n]_{ij}| \leq ce^{-\alpha d_n^\mathrm{cut}(i)} 
    \quad \textrm{and} \quad
    |[S_n]_{ij}| \leq ce^{-\alpha d_n^\mathrm{cut}(j)} 
  \end{equation}
  hold for all $i,j=1,\dots,n$.
  Then, for any $\varepsilon > 0$ each $S_n$ contains at most
  $O(p(n))$ entries greater than $\varepsilon$ in magnitude.
\end{theorem}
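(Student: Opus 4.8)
The plan is to mimic the argument used for Theorem~\ref{thm:on_entries_with_expdecay}, but replacing the distance $d_n(i,j)$ between vertices by the distance $d_n^\mathrm{cut}(i)$ to the cut. The key observation is that the hypothesis \eqref{thm:decay_from_cut} lets us control the magnitude of $[S_n]_{ij}$ by a single quantity attached to a row (or column) index, namely $d_n^\mathrm{cut}(i)$, rather than by a pairwise quantity. So the ``many small entries'' statement collapses to a counting problem over row (or column) indices only.

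First I would fix $\varepsilon > 0$ and suppose, without loss of generality, that the first alternative in \eqref{thm:decay_from_cut} holds, i.e.~$|[S_n]_{ij}| \leq c e^{-\alpha d_n^\mathrm{cut}(i)}$ for all $i,j$ (the column case is symmetric). Then any entry with $|[S_n]_{ij}| > \varepsilon$ forces
\begin{equation}
  d_n^\mathrm{cut}(i) < \frac{1}{\alpha}\ln\!\left(\frac{c}{\varepsilon}\right) =: R_\varepsilon,
\end{equation}
a constant independent of $n$. Hence every such entry lies in a row $i$ belonging to the set $\{i : d_n^\mathrm{cut}(i) < R_\varepsilon\}$, whose cardinality is, by the new hypothesis \eqref{eq:entries_within_R_from_cut}, bounded by $\gamma R_\varepsilon^{\beta}\, p(n) = O(p(n))$.

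Next I would bound the number of large entries within each such row. Here I invoke Theorem~\ref{thm:on_entries_with_expdecay}: since $\{S_n\}$ satisfies its assumptions (exponential decay \eqref{eq:thm4_exp_decay} with $n$-independent constants, and the neighbor growth bound \eqref{eq:bound_n_neighbor}), the number of entries exceeding $\varepsilon$ in magnitude in any single row is bounded by a constant $K_\varepsilon$ independent of $n$ — this is exactly the ``row/column'' conclusion of that theorem. Multiplying the two bounds, the total number of entries of $S_n$ with magnitude above $\varepsilon$ is at most $K_\varepsilon \cdot \gamma R_\varepsilon^{\beta}\, p(n) = O(p(n))$, which is the claim. (The column alternative of \eqref{thm:decay_from_cut} is handled identically, counting over columns instead of rows.)

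I do not anticipate a serious obstacle: the argument is essentially a product of two independent counting estimates, one for ``which rows can contain large entries'' (controlled by the decay away from the cut together with \eqref{eq:entries_within_R_from_cut}) and one for ``how many large entries a fixed row can have'' (controlled by the ordinary exponential decay, already established in Theorem~\ref{thm:on_entries_with_expdecay}). The only point requiring a little care is to make sure the per-row bound from Theorem~\ref{thm:on_entries_with_expdecay} is genuinely $n$-independent and applies here — but that is explicitly part of its statement, and the hypotheses of the present theorem include those of Theorem~\ref{thm:on_entries_with_expdecay} by assumption. A minor bookkeeping remark worth including is that $R_\varepsilon$ depends on $\varepsilon$ (and on $c,\alpha$) but not on $n$, so both factors in the product are of the stated asymptotic order.
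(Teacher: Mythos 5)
Your proof is correct and follows essentially the same route as the paper's: use the decay away from the cut to confine large entries to the $O(p(n))$ rows (or columns) within a fixed distance $R_\varepsilon$ of the cut, then invoke the per-row/per-column constant bound from Theorem~\ref{thm:on_entries_with_expdecay} and multiply. The only cosmetic difference is that the paper keeps both alternatives of \eqref{thm:decay_from_cut} in play simultaneously (``only $O(p(n))$ rows or columns may have large entries'') rather than reducing to one case without loss of generality, but the substance is identical.
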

\begin{proof}
  For any matrix entry $[S_n]_{ij}$ with magnitude greater than $\varepsilon$,
  \eqref{thm:decay_from_cut} implies 
  \begin{equation}
    d_n^\mathrm{cut}(i) < \frac{1}{\alpha}\ln \left(\frac{c}{\varepsilon}\right)
    \quad \textrm{or} \quad
    d_n^\mathrm{cut}(j) < \frac{1}{\alpha}\ln \left(\frac{c}{\varepsilon}\right).
  \end{equation}
  By~\eqref{eq:entries_within_R_from_cut} the number of vertices 
  within distance $\frac{1}{\alpha}\ln
  \left(\frac{c}{\varepsilon}\right)$ from the cut
  is bounded by $\gamma\left(\frac{1}{\alpha}\ln
  \left(\frac{c}{\varepsilon}\right)\right)^\beta p(n)$, where
  $\gamma\left(\frac{1}{\alpha}\ln
  \left(\frac{c}{\varepsilon}\right)\right)^\beta$ is a constant independent
  of $n$.
  Thus, only $O(p(n))$ rows or columns may have entries
  with magnitude greater than $\varepsilon$ and
  by Theorem~\ref{thm:on_entries_with_expdecay}, the number of 
  entries in each row and column with magnitude greater than
  $\varepsilon$ is bounded by a constant.
\end{proof}
We will refer to the number of vertices within a given distance $R$
from the cut, i.e. $|\{i:d_n^\mathrm{cut}(i) < R \}|$, as the
\emph{cut size}.  The function $p(n)$ in
\eqref{eq:entries_within_R_from_cut} describes how the cut size
increases with $n$.
Theorem~\ref{thm:k_entries_with_cutdecay} tells us that for a
sequence of matrices with exponential decay with distance between
vertices and exponential decay away from the cut, the number of
significant entries does not grow faster than the cut size. Note that,
in general, exponential decay away from the cut alone is not
sufficient to reach this result.
For example, assume that $p(n) = \sqrt{n}$, so that the cut size grows
as $O(\sqrt{n})$, and that
both conditions in \eqref{thm:decay_from_cut} are satisfied.  Then,
$O(\sqrt{n})$ rows and $O(\sqrt{n})$ columns may have entries with
magnitude greater than $\varepsilon$, giving a total of $O(n)$ matrix
entries that may have magnitude greater than $\varepsilon$.

\begin{theorem}\label{thm:exp_decay_from_index_set}
  Let $\{S_n\}$ and $\{T_n\}$ be sequences of $n \times n$ matrices
  with a common associated distance function $d_n(i,j)$ for each $n$.
  Assume that for any $R \geq 0$, there are constants $\gamma>0$ and $\beta
  > 0$ independent of $n$ such that
  \begin{equation} \label{eq:bound_n_neighbor_C}
    |N_{d_n}(i,R)| \leq \gamma R^\beta
  \end{equation}
  holds for all $i$. For each $n$, let $I_n$ be a subset of the index set
  $\{1,\dots,n\}$ and let $c_1$, $c_2$, and $\alpha$ be positive and
  independent of $n$.
  \begin{enumerate}
  \item[(i)]   Assume that
    \begin{align}
      |[S_n]_{ij}| &\leq c_1e^{-\alpha \min_{p\in I_n}d_n(i,p)}, \\
      |[T_n]_{ij}| &\leq c_2e^{-\alpha d_n(i,j)} 
    \end{align}
    for all $i,j$. 
    Then, the entries of $U_n = S_nT_n$ satisfy
    \begin{equation}\label{eq:mmmul_cut_decay_i}
      |[U_n]_{ij}| \leq ce^{-\alpha' \min_{p\in I_n}d_n(i,p)} \textrm{ for all } i,j
    \end{equation}
    for any $\alpha'$ such that $0<\alpha'<\alpha$ with $c$
    independent of $n$. 
    This bound holds also with $U_n = T_nS_n$.
  \item[(ii)]   Assume that
    \begin{align}
      |[S_n]_{ij}| &\leq c_1e^{-\alpha \min_{p\in I_n}d_n(j,p)}, \\
      |[T_n]_{ij}| &\leq c_2e^{-\alpha d_n(i,j)} 
    \end{align}
    for all $i,j$. Then, the entries of $U_n = S_nT_n$ satisfy
    \begin{equation}\label{eq:mmmul_cut_decay_j}
      |[U_n]_{ij}| \leq ce^{-\alpha' \min_{p\in I_n}d_n(j,p)} \textrm{ for all } i,j
    \end{equation}
    for any $\alpha'$ such that $0<\alpha'<\alpha$ with $c$ independent
    of $n$. 
    This bound holds also with $U_n = T_nS_n$.
  \end{enumerate}
\end{theorem}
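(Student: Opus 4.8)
The plan is to mimic the proof of Theorem~\ref{thm:exp_decay_product}, inserting a single triangle-inequality step that lets the decay \emph{away from $I_n$} pass through the matrix product. Write $\mu_n(i) = \min_{p\in I_n} d_n(i,p)$, fix $\alpha'$ with $0 < \alpha' < \alpha$, and set $\omega = \alpha - \alpha' > 0$. The geometric fact used repeatedly is the triangle inequality applied before the minimum: for any $i,k$ and any $p\in I_n$ one has $\mu_n(i) \le d_n(i,p) \le d_n(i,k) + d_n(k,p)$, hence $\mu_n(i) \le d_n(i,k) + \mu_n(k)$.

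For part~(i) with $U_n = S_n T_n$: termwise, $|[U_n]_{ij}| \le \sum_k |[S_n]_{ik}|\,|[T_n]_{kj}| \le c_1 e^{-\alpha\mu_n(i)} \cdot c_2 \sum_k e^{-\alpha d_n(k,j)}$, where the first factor comes out of the sum because its bound does not depend on $k$. The remaining sum $\sum_k e^{-\alpha d_n(k,j)}$ is bounded by a constant independent of $n$ by the grouping-by-distance argument of Theorem~\ref{thm:exp_decay_product} (using~\eqref{eq:bound_n_neighbor_C} and the ratio test), which gives~\eqref{eq:mmmul_cut_decay_i} with a constant $c = c_1 c_2 \kappa$ — in fact even with $\alpha$ in place of $\alpha'$, which is only stronger. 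For $U_n = T_n S_n$ the $I_n$-decay now sits on the summation index, $|[S_n]_{kj}| \le c_1 e^{-\alpha\mu_n(k)}$, so $|[U_n]_{ij}| \le c_1 c_2 \sum_k e^{-\alpha d_n(i,k)} e^{-\alpha\mu_n(k)}$; splitting $\alpha = \alpha' + \omega$ and using $d_n(i,k) + \mu_n(k) \ge \mu_n(i)$ for the $\alpha'$-part and $\mu_n(k) \ge 0$ for the $\omega$-part gives $e^{-\alpha d_n(i,k)} e^{-\alpha\mu_n(k)} \le e^{-\alpha'\mu_n(i)} e^{-\omega d_n(i,k)}$, and $\sum_k e^{-\omega d_n(i,k)}$ is again a constant independent of $n$ exactly as in Theorem~\ref{thm:exp_decay_product}.

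Part~(ii) is the mirror image: $\mu_n$ is evaluated at the column index and the two orderings swap roles. For $U_n = T_n S_n$ the bound $|[S_n]_{kj}| \le c_1 e^{-\alpha\mu_n(j)}$ is independent of $k$, so the constant-sum argument applies directly to $\sum_k e^{-\alpha d_n(i,k)}$. For $U_n = S_n T_n$ the $I_n$-decay lands on $k$, $|[S_n]_{ik}| \le c_1 e^{-\alpha\mu_n(k)}$, and combining with $|[T_n]_{kj}| \le c_2 e^{-\alpha d_n(k,j)}$ via $\mu_n(j) \le d_n(k,j) + \mu_n(k)$ gives, after the same $\alpha = \alpha' + \omega$ split, $e^{-\alpha d_n(k,j)} e^{-\alpha\mu_n(k)} \le e^{-\alpha'\mu_n(j)} e^{-\omega d_n(k,j)}$, whose sum over $k$ is bounded by a constant; this yields~\eqref{eq:mmmul_cut_decay_j}.

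I do not expect a genuine obstacle: the only substantive new ingredient is the elementary inequality $\mu_n(i) \le d_n(i,k) + \mu_n(k)$, and the convergence of $\sum_k e^{-\omega d_n(k,j)}$ to an $n$-independent constant is imported verbatim from Theorem~\ref{thm:exp_decay_product}. The part needing the most care is bookkeeping — tracking, in each of the four cases (two parts $\times$ two product orderings), which factor carries the decay away from $I_n$ and whether its bound depends on the summation index, since only in the latter situation are the triangle-inequality step and the loss from $\alpha$ to $\alpha'$ actually needed.
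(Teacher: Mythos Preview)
Your proposal is correct and follows essentially the same route as the paper: the same $\alpha = \alpha' + \omega$ split, the same triangle-inequality step $\mu_n(i) \le d_n(i,k) + \mu_n(k)$ (the paper states it as $\min_{p\in I_n}d_n(k,p)+d_n(k,j) \geq \min_{p\in I_n}d_n(j,p)$), and the same appeal to the convergent sum from Theorem~\ref{thm:exp_decay_product}. Your treatment is in fact more explicit than the paper's, which handles three of the four cases in a single sentence, and your observation that in the two ``easy'' orderings one actually gets the bound with $\alpha$ rather than $\alpha'$ is a correct sharpening not noted there.
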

\begin{proof}
  Case~(i): Similarly to the proof of
  Theorem~\ref{thm:exp_decay_product} we have, again with
  $\omega=\alpha-\alpha'$, that
  \begin{align}
    |[U_n]_{ij}| &\leq \sum_{k=1}^n |[S_n]_{ik}||[T_n]_{kj}| \\
    &\leq 
    c_1c_2\sum_{k=1}^n e^{-\alpha' (\min_{p\in I_n}d_n(i,p)+d_n(k,j))} e^{-\omega d_n(k,j)}\\
    &\leq c_1c_2\sum_{k=1}^n e^{-\omega d_n(k,j)}e^{-\alpha' \min_{p\in I_n}d_n(i,p)}
  \end{align}
  where $\sum_{k=1}^n e^{-\omega d_n(k,j)}$ is a constant independent
  of $n$, due to \eqref{eq:bound_n_neighbor_C}, as shown in the proof of
  Theorem~\ref{thm:exp_decay_product}.  The case with $U_n = T_nS_n$
  and case~(ii) can be shown in essentially the same way using that
  $\min_{p\in I_n}d_n(k,p)+d_n(k,j) \geq \min_{p\in I_n}d_n(j,p)$.
\end{proof}

We are particularly interested in the case of a binary partition of
the index set and exponential decay away from the cut. The following
result shows that multiplication of a matrix with exponential decay
away from the cut with a matrix with exponential decay with distance
between vertices gives a matrix with exponential decay away from the
cut.

\begin{corollary}\label{cor:cut_decay_product}
  Let $\{S_n\}$ and $\{T_n\}$ be sequences of $n \times n$ matrices
  with a common associated distance function $d_n(i,j)$ for each $n$.
  Assume that for any $R \geq 0$, there are constants $\gamma>0$ and $\beta
  > 0$ independent of $n$ such that
  \begin{equation}
    |N_{d_n}(i,R)| \leq \gamma R^\beta
  \end{equation}
  holds for all $i$. For each $n$, let $I_{A_n}$, $I_{C_n}$ be a binary partition of the
  index set $\{1,\dots,n\}$ and assume that
  \begin{align}
    |[S_n]_{ij}| &\leq c_1e^{-\alpha \max (d^\mathrm{cut}_n(i),d^\mathrm{cut}_n(j))}, \\
    |[T_n]_{ij}| &\leq c_2e^{-\alpha d_n(i,j)} 
  \end{align}
  for all $i,j$ where $d_n^\mathrm{cut}(i) = \min_{k\in I_{A_n}} d_n(i,k)
  + \min_{k\in I_{C_n}} d_n(i,k)$ and where $c_1$, $c_2$, and $\alpha$ are
  positive and independent of $n$. Then, the entries of $U_n = S_nT_n$
  satisfy
  \begin{equation}
    |[U_n]_{ij}| \leq ce^{-\alpha' \max (d^\mathrm{cut}_n(i),d^\mathrm{cut}_n(j))} \textrm{ for all } i,j
  \end{equation}
  for any $\alpha'$ such that $0<\alpha'<\alpha$ with $c$ independent of $n$.
  This bound holds also with $U_n = T_nS_n$.
\end{corollary}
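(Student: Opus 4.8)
The plan is to reduce the claim to four applications of Theorem~\ref{thm:exp_decay_from_index_set}, one for each of the decay-away-from-an-index-set conditions contained in the decay-away-from-the-cut hypothesis on $\{S_n\}$, and then reassemble the four resulting bounds on $\{U_n\}$ into a single decay-away-from-the-cut bound. First I would unpack the hypothesis on $S_n$. Since $d_n^{\mathrm{cut}}(i) = \min_{k \in I_{A_n}} d_n(i,k) + \min_{k \in I_{C_n}} d_n(i,k)$ and exactly one of the two summands vanishes (the one indexing over the part of the partition containing $i$), we have $d_n^{\mathrm{cut}}(i) = \max\!\big(\min_{k \in I_{A_n}} d_n(i,k),\, \min_{k \in I_{C_n}} d_n(i,k)\big)$, and likewise for $j$. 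Hence, as already recorded in \eqref{eq:cut_decay_equiv}, the bound $|[S_n]_{ij}| \leq c_1 e^{-\alpha \max(d_n^{\mathrm{cut}}(i),d_n^{\mathrm{cut}}(j))}$ is equivalent to the four simultaneous bounds $|[S_n]_{ij}| \leq c_1 e^{-\alpha \min_{p\in I_{A_n}} d_n(i,p)}$, $|[S_n]_{ij}| \leq c_1 e^{-\alpha \min_{p\in I_{C_n}} d_n(i,p)}$, $|[S_n]_{ij}| \leq c_1 e^{-\alpha \min_{p\in I_{A_n}} d_n(j,p)}$, and $|[S_n]_{ij}| \leq c_1 e^{-\alpha \min_{p\in I_{C_n}} d_n(j,p)}$.

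Next I would apply Theorem~\ref{thm:exp_decay_from_index_set} four times, each with the same $T_n$ (which satisfies the decay-with-distance hypothesis and the neighbor bound \eqref{eq:bound_n_neighbor_C}) and the same exponent $\alpha'$ with $0 < \alpha' < \alpha$. Case~(i) with $I_n = I_{A_n}$ gives $|[U_n]_{ij}| \leq c\, e^{-\alpha' \min_{p\in I_{A_n}} d_n(i,p)}$, case~(i) with $I_n = I_{C_n}$ gives $|[U_n]_{ij}| \leq c\, e^{-\alpha' \min_{p\in I_{C_n}} d_n(i,p)}$, and case~(ii) with $I_n = I_{A_n}$ and with $I_n = I_{C_n}$ gives the analogous two bounds in the column index $j$. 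Replacing $c$ by the largest of the four constants produced makes all four bounds hold for $U_n$ simultaneously; by the final sentence of each case of the theorem they also hold for $U_n = T_n S_n$.

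Finally I would reassemble. The first two bounds together give $|[U_n]_{ij}| \leq c\min\!\big(e^{-\alpha' \min_{p\in I_{A_n}} d_n(i,p)}, e^{-\alpha' \min_{p\in I_{C_n}} d_n(i,p)}\big) = c\, e^{-\alpha' \max(\min_{p\in I_{A_n}} d_n(i,p),\, \min_{p\in I_{C_n}} d_n(i,p))} = c\, e^{-\alpha' d_n^{\mathrm{cut}}(i)}$, using that $t\mapsto e^{-\alpha' t}$ is decreasing and the identity for $d_n^{\mathrm{cut}}$ noted above; the last two bounds analogously give $|[U_n]_{ij}| \leq c\, e^{-\alpha' d_n^{\mathrm{cut}}(j)}$. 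Combining these two yields $|[U_n]_{ij}| \leq c\, e^{-\alpha' \max(d_n^{\mathrm{cut}}(i),\, d_n^{\mathrm{cut}}(j))}$, which is the claim.

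I do not expect a genuine obstacle here: the analytic heart of the matter, namely convergence of the neighbor sum $\sum_k e^{-\omega d_n(k,j)}$ under the polynomial growth bound \eqref{eq:bound_n_neighbor_C}, is already carried out inside Theorem~\ref{thm:exp_decay_from_index_set}. The only thing to be careful about is the bookkeeping of the four cases, keeping the constant $c$ uniform across the four applications, and the elementary but easy-to-fumble identity $d_n^{\mathrm{cut}}(i) = \max\!\big(\min_{k\in I_{A_n}} d_n(i,k),\, \min_{k\in I_{C_n}} d_n(i,k)\big)$ that mediates between the $\max$-of-cut-distances form of the bound and the four index-set decay conditions.
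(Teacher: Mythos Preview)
Your proposal is correct and follows essentially the same approach as the paper: unpack the cut-decay hypothesis into the four index-set decay conditions~\eqref{eq:cut_decay_equiv}, apply Theorem~\ref{thm:exp_decay_from_index_set} to each, and recombine. The paper's proof states this in one sentence, whereas you have spelled out the bookkeeping (the four cases, the uniform constant, and the identity $d_n^{\mathrm{cut}}(i) = \max\bigl(\min_{k\in I_{A_n}} d_n(i,k),\, \min_{k\in I_{C_n}} d_n(i,k)\bigr)$) explicitly, but the argument is the same.
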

\begin{proof}
  Using that exponential decay away from the cut is equivalent to the
  four conditions~\eqref{eq:cut_decay_equiv} the result follows
  directly from Theorem~\ref{thm:exp_decay_from_index_set}.
\end{proof}

\subsection{Localization in iterative refinement}
In this section we will derive localization results for the matrices
occurring in regular and localized iterative refinement. We will under
certain assumptions show that while both
Algorithms~\ref{alg:iter_refine} and~\ref{alg:local_refine} involve
only matrices that satisfy exponential decay with respect to distance
between vertices, all matrices constructed in Algorithm~\ref{alg:local_refine}
also satisfy exponential decay with respect to distance from the
cut. 

\begin{theorem}\label{thm:iter_refine_preserves_decay}
Let $\{S_n\}$ and $\{Q_n\}$ be sequences of $n \times n$
matrices with associated distance functions
$\{d_n(i,j)\}$. Let each $S_n$ be Hermitian and partitioned as
\begin{equation}
  S_n = \begin{bmatrix} A_n & B_n \\ B_n^* & C_n \end{bmatrix}
\end{equation}
and let
\begin{equation}
  Q_n = \begin{bmatrix} Z_{A_n} & 0 \\ 0 & Z_{C_n} \end{bmatrix}
\end{equation}
where $Z_{A_n}$ and $Z_{C_n}$ satisfy $Z_{A_n}^* A_n Z_{A_n} = I$ and $Z_{C_n}^* C_n Z_{C_n} = I$, respectively.
Let $\{Z_n\}$ be the sequence of matrices produced by Algorithm~\ref{alg:iter_refine} or Algorithm~\ref{alg:local_refine}
with $S_n$, $Q_n$, and a constant $\varepsilon$ as input.
For each iteration $i=0,1,\dots$, until the stopping criterion $\|\delta_i\|_2 \leq \varepsilon$ is triggered, let 
\begin{align}
  \label{eq:alg_matrices_first}
  \{(Z_{i})_n\}, & 
\\
  \{(\delta_{i}^k)_n\}, & \quad k=1,2,\dots,m,
  \label{eq:alg_matrices_second}
\\
  \{(M_{i})_n\}, 
\\
  \{(SM_{i})_n\}, 
\\
  \{(Z_{i+1}^*(SM_{i}))_n\}, 
\\
  \{((M_{i}^*S)Z_{i})_n\}, 
\label{eq:alg_matrices_last}
\end{align}
be the sequences of matrices corresponding to each of the intermediate
matrices occurring in either one or both of
Algorithm~\ref{alg:iter_refine} and Algorithm~\ref{alg:local_refine}.
Assume that for any $R \geq 0$, there are constants $\gamma>0$ and
$\beta > 0$ independent of $n$ such that
\begin{equation}
  |N_{d_n}(i,R)| \leq \gamma R^\beta
\end{equation}
holds for all $i$.
Assume that $S_n$ and $Q_n$ satisfy the exponential
decay properties
\begin{align}
  |[S_n]_{ij}| & \leq ce^{-\alpha d_n(i,j)} \\
  |[Q_n]_{ij}| & \leq ce^{-\alpha d_n(i,j)} 
\end{align}
for all $i,j=1,\dots,n$ with constants $c>0$ and $\alpha>0$
independent of $n$. Assume also that the condition number of $S_n$,
$\kappa_n = |\lambda_{\mathrm{max}}(S_n) /
\lambda_{\mathrm{min}}(S_n)|$, is uniformly bounded with respect to
$n$.

Then, each of the matrices in \eqref{eq:alg_matrices_first} through
\eqref{eq:alg_matrices_last} satisfies an exponential decay property on the form
\begin{equation} \label{eq:alg_decay_1}
  |[X_n]_{ij}| \leq c'e^{-\alpha' d_n(i,j)}  \textrm{ for all } i,j
\end{equation}
for any $\alpha'$ such that $0<\alpha'<\alpha$ with $c'$ independent
of $n$, where $X_n$ is any of the matrices in
\eqref{eq:alg_matrices_first} through \eqref{eq:alg_matrices_last}.
Besides satisfying \eqref{eq:alg_decay_1}, the matrices in
\eqref{eq:alg_matrices_second} through \eqref{eq:alg_matrices_last}
also satisfy exponential decay away from the cut on the form 
\begin{equation}
  |[X_n]_{ij}| \leq c'e^{-\alpha' \max (d^\mathrm{cut}_n(i),d^\mathrm{cut}_n(j))} \textrm{ for all } i,j
\end{equation}
for any $\alpha'$ such that $0<\alpha'<\alpha$ with $c'$ independent
of $n$, where $X_n$ is any of the matrices in
\eqref{eq:alg_matrices_second} through \eqref{eq:alg_matrices_last}.

\end{theorem}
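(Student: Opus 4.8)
The plan is to argue by induction over the iterations $i=0,1,\dots$, showing that each matrix listed in \eqref{eq:alg_matrices_first} through \eqref{eq:alg_matrices_last} carries the claimed decay property. Two preliminary observations make the induction work. First, by Theorem~\ref{thm:recinvfact} the initial factorization error satisfies $\|\delta_0\|_2 \leq 1-1/\kappa_n$, and since $\kappa_n$ is uniformly bounded in $n$, the bound \eqref{eq:iter_refine_no_of_iters} shows that the number of iterations $K$ performed before the stopping criterion $\|\delta_i\|_2 \leq \varepsilon$ is triggered is bounded by a constant independent of $n$; hence the total number of matrix multiplications carried out is some constant $N=N(K,m)$. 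Second, each multiplication degrades the decay rate by an arbitrarily small amount (Theorem~\ref{thm:exp_decay_product} and Corollary~\ref{cor:cut_decay_product}) while linear combinations preserve it, so once a target $\alpha'$ with $0<\alpha'<\alpha$ is fixed I may choose a finite decreasing ladder $\alpha=\alpha_0>\alpha_1>\cdots>\alpha_N=\alpha'$ and arrange that the $j$th multiplication sends inputs of rate $\geq\alpha_{j-1}$ (first downgrading the two factors to the common rate $\alpha_{j-1}$) to an output of rate $\geq\alpha_j$. The accompanying prefactors grow but remain independent of $n$ because the constants $c,\gamma,\beta$ and the condition-number bound entering the cited results are all uniform in $n$. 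Thus it suffices to keep track of \emph{which} of the two decay types each matrix enjoys, the rate and constant bookkeeping being automatic.

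For the base case, $\{(Z_0)_n\}=\{Q_n\}$ satisfies \eqref{eq:alg_decay_1} by hypothesis. For $\delta_0$, recall from \eqref{eq:delta_0_new} that $Z_{A_n}^*A_nZ_{A_n}=I$ and $Z_{C_n}^*C_nZ_{C_n}=I$ make the diagonal blocks of $\delta_0$ vanish, so $\delta_0=-Q_n^*E_nQ_n$ with $E_n=\begin{bmatrix}0&B_n\\B_n^*&0\end{bmatrix}$. The matrix $E_n$ is dominated entrywise by $S_n$ and hence satisfies exponential decay with distance between vertices; moreover, for $i\in I_{A_n}$ and $j\in I_{C_n}$ one has $d^\mathrm{cut}_n(i)=\min_{k\in I_{C_n}}d_n(i,k)\leq d_n(i,j)$ and likewise $d^\mathrm{cut}_n(j)\leq d_n(i,j)$, so $|[E_n]_{ij}|\leq ce^{-\alpha\max(d^\mathrm{cut}_n(i),d^\mathrm{cut}_n(j))}$, while the diagonal blocks of $E_n$ are zero; hence $E_n$ also satisfies exponential decay away from the cut. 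Applying Theorem~\ref{thm:exp_decay_product} twice yields distance decay of $\delta_0=-Q_n^*E_nQ_n$, and applying Corollary~\ref{cor:cut_decay_product} twice (with the cut-decaying factor on the right, then on the left) yields its decay away from the cut. Since the diagonal blocks of $\delta_0$ vanish, the very same matrix $\delta_0$ is produced by Algorithm~\ref{alg:iter_refine} and Algorithm~\ref{alg:local_refine}, so this covers both cases. The powers $\delta_0^k$ then inherit both decay types by a short inner induction on $k$: writing $\delta_0^k=\delta_0\cdot\delta_0^{k-1}$ and using that $\delta_0$ has cut decay while $\delta_0^{k-1}$ has distance decay (Corollary~\ref{cor:cut_decay_product}), together with Theorem~\ref{thm:exp_decay_product} for the distance decay.

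For the inductive step, assume $(Z_i)_n$ has distance decay and $(\delta_i)_n$ has both decay types, and go through the formulas \eqref{eq:local_iter_practice} of Algorithm~\ref{alg:local_refine}, which in exact arithmetic also produce the matrices $Z_i$, $\delta_i^k$ of Algorithm~\ref{alg:iter_refine}. As in the base case, the powers $\delta_i^k$, $k=1,\dots,m$, and the linear combination $\sum_{k=1}^m b_k\delta_i^k$ have both decay types. Then $M_i=Z_i(\sum_{k=1}^m b_k\delta_i^k)$ has distance decay by Theorem~\ref{thm:exp_decay_product} and cut decay by Corollary~\ref{cor:cut_decay_product} (cut-decaying factor on the right), and $Z_{i+1}=Z_i+M_i$ has distance decay as a sum of distance-decaying matrices. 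Next, $SM_i$ has distance decay by Theorem~\ref{thm:exp_decay_product} and cut decay by Corollary~\ref{cor:cut_decay_product} (with $S_n$ the distance-decaying factor and $M_i$ on the cut-decaying side); since $S_n$ is Hermitian, $M_i^*S=(SM_i)^*$ inherits both decays by symmetry of $d_n$. Consequently $Z_{i+1}^*(SM_i)$ and $(M_i^*S)Z_i$ each have distance decay (Theorem~\ref{thm:exp_decay_product}) and cut decay (Corollary~\ref{cor:cut_decay_product}, with the cut-decaying factor on the appropriate side), and finally $\delta_{i+1}=\delta_i-Z_{i+1}^*(SM_i)-(M_i^*S)Z_i$ has both decay types as a linear combination of matrices that do. This closes the induction, and because only the constant number $K$ of iterations is performed, all rates encountered stay above the prescribed $\alpha'$.

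The step I expect to be the main obstacle, or at least the one that really requires the extra hypotheses, is ensuring that the iteration count is uniformly bounded in $n$: without it the decay rate would be degraded an unbounded number of times and no single $\alpha'<\alpha$ would serve simultaneously for all $n$; this is precisely what Theorem~\ref{thm:recinvfact} combined with the uniform condition-number assumption delivers. The other structural point is that the diagonal blocks of $\delta_0$ vanish \emph{exactly}, which is what converts the off-diagonal block structure of $S_n$ into decay away from the cut for $\delta_0$ and, propagated through the recursion, for every $\delta_i^k$, $M_i$, and the remaining matrices. Everything else is a routine application of Theorem~\ref{thm:exp_decay_product}, Theorem~\ref{thm:exp_decay_from_index_set}, and Corollary~\ref{cor:cut_decay_product} together with the bookkeeping of rates and constants described above.
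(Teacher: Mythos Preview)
Your proposal is correct and follows essentially the same approach as the paper: bound the number of iterations uniformly via Theorem~\ref{thm:recinvfact} and \eqref{eq:iter_refine_no_of_iters}, then propagate distance decay through all intermediates by repeated use of Theorem~\ref{thm:exp_decay_product}, and propagate cut decay starting from the block-off-diagonal structure of $\delta_0$ via Corollary~\ref{cor:cut_decay_product}. The paper's argument is more terse (it packages your inline verification that $E_n$ has cut decay into Lemma~\ref{lem:exp_decay_gives_cut_decay} and simply says ``repeated application'' rather than spelling out the induction and the $\alpha$-ladder), but the logical content is the same.
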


\begin{lemma}\label{lem:exp_decay_gives_cut_decay}
  Let $S=\begin{bmatrix} A & B \\ D & C \end{bmatrix}$ satisfy the
  exponential decay property with respect to distance between vertices
  \begin{equation}
    |S_{ij}| \leq ce^{-\alpha d(i,j)} 
  \end{equation}
  for all $i,j=1,\dots,n$ with positive constants $c$ and $\alpha$ and
  assume that $A=0$ and $C=0$.  Then $S$ also satisfies the
  exponential decay property with respect to distance to cut
  \begin{equation} 
    |S_{ij}| \leq ce^{-\alpha \max(d^\mathrm{cut}(i),d^\mathrm{cut}(j))}
  \end{equation}
   for all $i,j=1,\dots,n$, where $d^\mathrm{cut}(i)$ is defined as in~\eqref{eq:cut_dist_single_matrix}.
\end{lemma}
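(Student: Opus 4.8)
The plan is to argue by a case split on which block the indices $i$ and $j$ belong to, using the hypothesis that the diagonal blocks vanish. Write $I_A$ and $I_C$ for the binary partition of $\{1,\dots,n\}$ that induces the block structure of $S$. The assumption $A=0$, $C=0$ says exactly that $S_{ij}=0$ whenever $i$ and $j$ lie in the same part. For such pairs the asserted inequality is immediate, since its right-hand side is nonnegative while the left-hand side is zero.

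For the remaining case, suppose $i$ and $j$ lie in opposite parts; by the symmetry $d(i,j)=d(j,i)$ we may assume $i\in I_A$ and $j\in I_C$. The point is then to bound the distance to the cut of each endpoint by $d(i,j)$. Since $i\in I_A$, the term $\min_{k\in I_A}d(i,k)$ in \eqref{eq:cut_dist_single_matrix} equals zero (take $k=i$), so $d^\mathrm{cut}(i)=\min_{k\in I_C}d(i,k)\le d(i,j)$ because $j\in I_C$. Symmetrically, $d^\mathrm{cut}(j)=\min_{k\in I_A}d(j,k)\le d(j,i)=d(i,j)$. Hence $\max(d^\mathrm{cut}(i),d^\mathrm{cut}(j))\le d(i,j)$, and since $t\mapsto e^{-\alpha t}$ is decreasing for $\alpha>0$ we get $|S_{ij}|\le ce^{-\alpha d(i,j)}\le ce^{-\alpha\max(d^\mathrm{cut}(i),d^\mathrm{cut}(j))}$, which is the claim.

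There is no real obstacle here; the argument is a one-line observation once the right case split is in place. The only thing to watch is the bookkeeping in the definition of $d^\mathrm{cut}$: for an index belonging to one part, the contribution of that part to $d^\mathrm{cut}$ is zero, so $d^\mathrm{cut}$ reduces to the distance to the opposite part --- precisely the quantity that controls $d(i,j)$ for the pairs $(i,j)$ where $S_{ij}$ can fail to vanish. Alternatively, one can note directly that $S_{ij}\ne 0$ forces $i$ and $j$ into opposite parts, so both $d^\mathrm{cut}(i)\le d(i,j)$ and $d^\mathrm{cut}(j)\le d(i,j)$ hold automatically whenever the bound actually needs to be proved.
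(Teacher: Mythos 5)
Your proof is correct and follows essentially the same route as the paper's: off-diagonal entries are handled by noting that $d(i,j)\ge\max(d^\mathrm{cut}(i),d^\mathrm{cut}(j))$ when $i$ and $j$ lie in opposite parts, and diagonal-block entries vanish by assumption. You merely spell out in more detail why the key distance inequality holds, which the paper states without elaboration.
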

\begin{proof}
  For all $i \in I_A$, $j \in I_C$ we have that
  \begin{equation}
    d(i,j) \geq \max (d^\mathrm{cut}(i), d^\mathrm{cut}(j))
  \end{equation}
  and therefore 
  \begin{equation}
     |S_{ij}| \leq ce^{-\alpha d(i,j)} \leq ce^{-\alpha \max
      (d^\mathrm{cut}(i), d^\mathrm{cut}(j))}.
  \end{equation}
  The same bounds clearly hold also for $i \in I_C$, $j \in I_A$ and
  since all other entries are zero, the exponential decay property
  with respect to distance to cut is thus satisfied.
\end{proof}

\begin{proof}[Proof of Theorem~\ref{thm:iter_refine_preserves_decay}]
  Since $\kappa_n$ is uniformly bounded,
  by~\eqref{eq:iter_refine_no_of_iters} the number of iterations until
  the stopping criterion is triggered is also uniformly bounded. All
  matrices in \eqref{eq:alg_matrices_first} through
  \eqref{eq:alg_matrices_last} are therefore the result of a bounded
  number of matrix-matrix multiplications and additions. Repeated
  application of Theorem~\ref{thm:exp_decay_product} implies the
  exponential decay property \eqref{eq:alg_decay_1} for each matrix in
  \eqref{eq:alg_matrices_first} through \eqref{eq:alg_matrices_last}.

  By Lemma~\ref{lem:exp_decay_gives_cut_decay}, the matrix
  $\begin{bmatrix} 0 & B_n \\ B_n^* & 0 \end{bmatrix}$ satisfies an
  exponential decay property with respect to distance to the
  cut. Therefore and by Corollary~\ref{cor:cut_decay_product} the
  matrix
  \begin{equation}\label{eq:delta_part}
    \begin{bmatrix} 0 & B_n \\ B_n^* & 0 \end{bmatrix} Q_n 
  \end{equation}
  also satisfies an
  exponential decay property with respect to distance to the
  cut. Corollary~\ref{cor:cut_decay_product} also applies to the
  matrix $\delta_0$ since it is the result of multiplication of
  $Q_n^*$ with \eqref{eq:delta_part}. In fact,
  Corollary~\ref{cor:cut_decay_product} may be applied to each matrix
  in \eqref{eq:alg_matrices_second} through
  \eqref{eq:alg_matrices_last} since a matrix with exponential decay
  away from the cut is involved in each product.
\end{proof}

Theorem~\ref{thm:on_entries_with_expdecay} and
Theorem~\ref{thm:iter_refine_preserves_decay} imply that for any
$\varepsilon > 0$ each matrix in \eqref{eq:alg_matrices_first} through
\eqref{eq:alg_matrices_last} contains at most $O(n)$ entries greater
than $\varepsilon$ in magnitude. Furthermore, the number of entries
greater than $\varepsilon$ in magnitude in each column or row is
bounded by a constant independent of $n$.  Let $p(n)$ be a function
such that the cut size increases as $O(p(n))$ with system size $n$.
Then, Theorem~\ref{thm:k_entries_with_cutdecay} and
Theorem~\ref{thm:iter_refine_preserves_decay} imply that for any
$\varepsilon > 0$ each matrix in \eqref{eq:alg_matrices_second}
through \eqref{eq:alg_matrices_last} contains at most $O(p(n))$
entries greater than $\varepsilon$ in magnitude.

\section{Recursive and localized inverse factorization}\label{sec:recursive}
Associate with $S$ a binary principal submatrix tree corresponding to
a recursive binary partition of the index set $\{1,\dots,n\}$ of
$S$. This recursive partitioning may continue down to single matrix
elements or stop at some higher level. 

Given such a binary principal submatrix tree, one can use
Algorithm~\ref{alg:iter_refine} in a recursive construction of an
inverse factor. The matrix is split into four quadrants according to
the binary partition of the index set, the method is called
recursively on the two diagonal submatrices, and then the iterative
refinement of Algorithm~\ref{alg:iter_refine} is used to obtain the
inverse factor of the whole matrix. Our recursive inverse
factorization algorithm is given by Algorithm~\ref{alg:rec-inv-fact}.
Algorithm~\ref{alg:rec-inv-fact} was first proposed
in~\cite{rubenssonBockHolmstromNiklasson}, but includes here the new
stopping criterion for iterative refinement presented in
Section~\ref{sec:stopping}. On line~\ref{algline:recinvfact_stopcrit} either one of the spectral 
and Frobenius matrix norms may be used.  

Our localized inverse factorization, given by
Algorithm~\ref{alg:loc-inv-fact}, is obtained in essentially the same
way, but makes use of the localized construction of starting guess and
the localized iterative refinement of Algorithm~\ref{alg:local_refine}.

Theorem~\ref{thm:recinvfact} implies convergence of the iterative
refinement for each level of the recursion in
Algorithm~\ref{alg:rec-inv-fact} and~\ref{alg:loc-inv-fact}. Furthermore, it follows
from~\eqref{eq:thm_rif_1} and~\eqref{eq:thm_rif_2} that the bound of
the number of iterations, needed to reach an accuracy $\|\delta_i\|_2 <
\varepsilon$, given by \eqref{eq:iter_refine_no_of_iters}, holds for all
levels in the recursion.

\begin{algorithm}
  \caption{Recursive inverse factorization \label{alg:rec-inv-fact}}
\begin{algorithmic}[1]
  \Procedure{rec-inv-fact}{S}
  \State \textbf{input:} Hermitian positive definite matrix $S$ with an associated binary principal submatrix tree.
  \If{lowest level}
  \State Factorize $S^{-1} = ZZ^*$ and
  \Return Z
  \EndIf
  \State With the given binary principal submatrix partition $S = \begin{bmatrix} A & B \\ B^* & C \end{bmatrix}$, \label{algline:submat-partition}
  \State compute $Z_A = $ {\sc rec-inv-fact}(A) and $Z_C = $ {\sc rec-inv-fact}(C) \label{algline:recursive_calls}
  \State $Z_0 = \begin{bmatrix} Z_A & 0 \\ 0 & Z_C \end{bmatrix}$
  \State $\delta_{0} = I-Z_{0}^*SZ_{0}$
  \Repeat \textbf{ for} $i=0,1,\dots$
  \State $Z_{i+1} = Z_i \sum_{k=0}^m b_k\delta_i^k$
  \State $\delta_{i+1} = I-Z_{i+1}^*SZ_{i+1}$ \label{algline:regular_delta_computation}
  \Until{$\|\delta_{i+1}\| > \|\delta_{i}\|^{m+1}$}\label{algline:recinvfact_stopcrit}
  \State \Return $Z_{i+1}$
  \EndProcedure
\end{algorithmic}
\end{algorithm}

\begin{algorithm}
  \caption{Localized inverse factorization \label{alg:loc-inv-fact}}
\begin{algorithmic}[1]
  \Procedure{loc-inv-fact}{S}
  \State \textbf{input:} Hermitian positive definite matrix $S$ with an associated binary principal submatrix tree.
  \If{lowest level}
  \State Factorize $S^{-1} = ZZ^*$ and
  \Return Z
  \EndIf
  \State With the given binary principal submatrix partition $S = \begin{bmatrix} A & B \\ B^* & C \end{bmatrix}$, 
  \State compute $Z_A = $ {\sc loc-inv-fact}(A) and $Z_C = $ {\sc loc-inv-fact}(C)
  \State $Z_0 = \begin{bmatrix} Z_A & 0 \\ 0 & Z_C \end{bmatrix}$
  \State $\delta_{0} =
  -\begin{bmatrix} 0 & Z_A^*BZ_C \\ Z_C^*B^*Z_A & 0 \end{bmatrix}$
  \Repeat \textbf{ for} $i=0,1,\dots$
  \State $M_i  =  Z_i (\sum_{k=1}^m b_k\delta_i^k)$
  \State $Z_{i+1}  =  Z_i + M_i$
  \State $\delta_{i+1} = \delta_i - Z_{i+1}^*(SM_i) - (M_i^*S)Z_i$
  \Until{$\|\delta_{i+1}\| > \|\delta_{i}\|^{m+1}$} \label{algline:locinvfact_stopcrit}
  \State \Return $Z_{i+1}$
  \EndProcedure
\end{algorithmic}
\end{algorithm}

\section{Numerical experiments}\label{sec:numerexp}
In this section the localization properties of the localized inverse
factorization, i.e.~Algorithm~\ref{alg:loc-inv-fact}, are
demonstrated. In all experiments, the recursion in
Algorithm~\ref{alg:loc-inv-fact} is continued all the way down to
single matrix elements where $Z = 1/\sqrt{S}$. In this
way the final inverse factor is, up to differences due to floating
point rounding, uniquely determined by
Algorithm~\ref{alg:loc-inv-fact} and the recursive partition of the
index set.
In all numerical experiments $m=1$ was used in the iterative
refinement and the Frobenius norm was used in the stopping criterion
on line~\ref{algline:locinvfact_stopcrit} of the algorithm.

Note that from a computational point of view it would likely be
beneficial to stop the recursion at some predetermined larger block
size and use for example the AINV algorithm~\cite{benzi-ainv} or one
of its variants~\cite{benzi-ainvstable, benzi-ainvblock,
  hierarchic_2007} to compute the inverse factor at the lowest
level. The recursion may for example be stopped when there is no
longer enough sparsity to take advantage of localization in $S$ and/or
when the inverse factorization at that level will run serially,
e.g.\ due to limited resources, so that the parallel features of the
recursive algorithm will anyway not be utilized.

\subsection{Simple lattices}
Our first set of benchmark systems are chosen to clearly demonstrate
the localization behavior for one-, two, and
three-dimensional systems. The systems are also such that the results
should be relatively easy to reproduce, not relying on auxiliary
information, requiring extensive programming effort nor access to a
supercomputer.
We consider adjacency matrices corresponding to one-, two-, and three-dimensional integer lattices, i.e.\ a grid with unit spacing between nearest neighbors.
Diagonal matrix entries are set to $\alpha$ and matrix entries
corresponding to edges between nearest neighbors on the lattice are
set to $\beta$. In the one-dimensional case this gives a tridiagonal
matrix. In the two- and three-dimensional cases the vertices of the
lattice are ordered using a recursive binary divide space
procedure. At each level of the recursion the vertices are sorted
along the greatest dimension and split in two subsets.  Unless
otherwise stated, we use the set of parameters in
Table~\ref{tab:lattice_params}.
\begin{table}
  \begin{center}
    \begin{tabular}{c|c|c|c}
      Lattice & No. of vertices & $\alpha$ & $\beta$ \\
      \hline
      \includegraphics[width=0.07\textwidth]{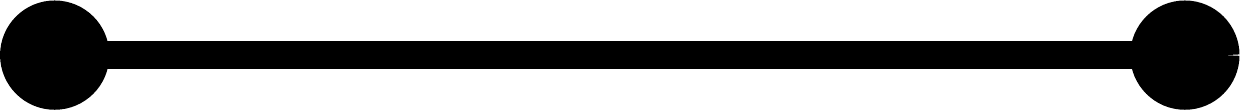} & 512 & 1 & 0.25 \\
      \includegraphics[width=0.07\textwidth]{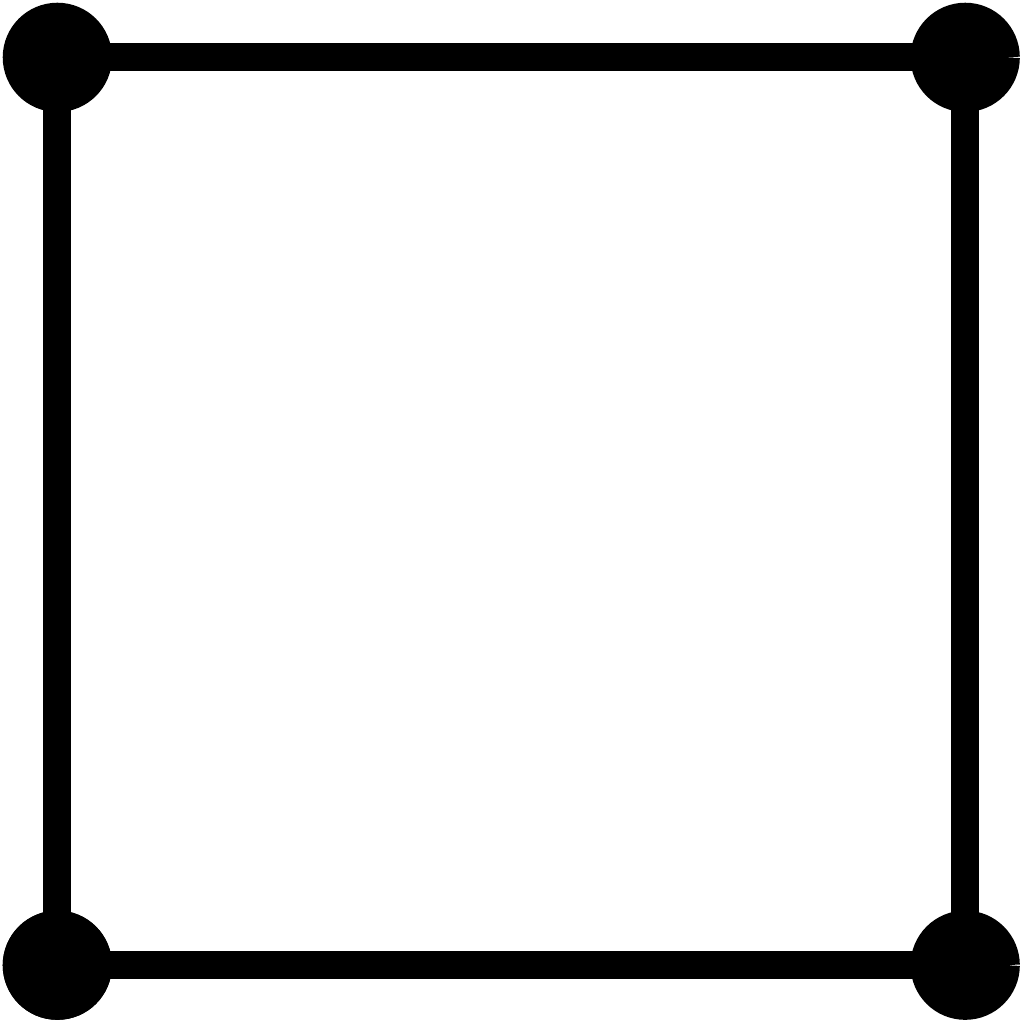} & $64\times 64$ & 1 & 0.05 \\
      \includegraphics[width=0.07\textwidth]{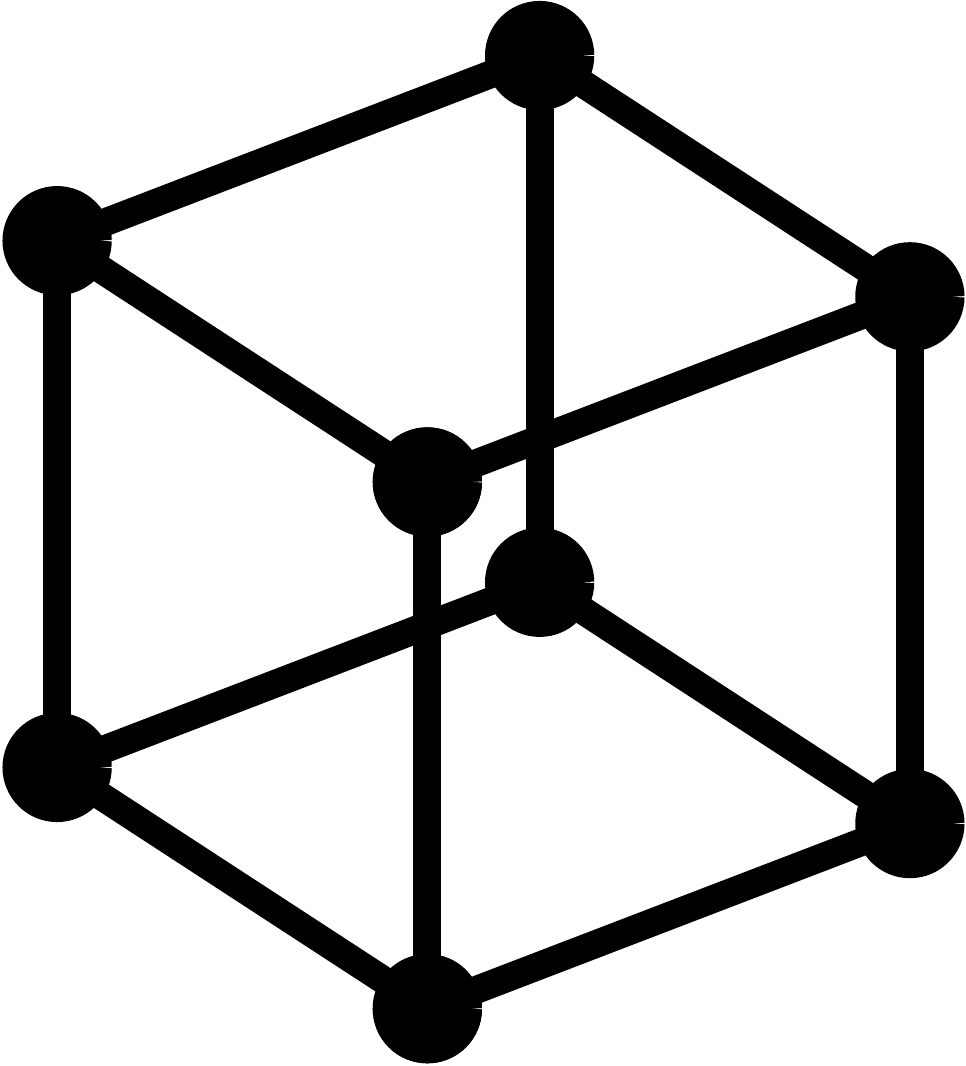} & $16\times 16\times 16$ & 1 & 0.01  \\
      \hline
    \end{tabular}
    \caption{Parameters used to set up the lattice benchmark
      systems.\label{tab:lattice_params}}
  \end{center}
\end{table}

\subsubsection{Localization in inverse factor}
Figure~\ref{fig:localization_Z} demonstrates the localization in the
final inverse factor $Z$ produced by Algorithm~\ref{alg:loc-inv-fact}
for the 1D, 2D, and 3D lattice systems.  Note that, in the upper
panels showing images of the $Z$-matrices, a single blue color is used
to indicate matrix elements with absolute value smaller than
$10^{-30}$.  Although not visible in the figures, the decay of course
continues below $10^{-30}$.

In the image of $Z$ in the upper left panel for the one-dimensional
system the matrix element magnitude clearly decays away from the
diagonal which in this case corresponds to decay with distance between
vertices. This decay with distance is more difficult to grasp looking
only at the images in the upper panels for the two- and
three-dimensional systems. However, the lower panels reveals an even
faster decay with distance for the two- and three-dimensional systems,
which is due to the smaller matrix entries corresponding to edges
between nearest neighbors determined by the parameter $\beta$, see
Table~\ref{tab:lattice_params}.

\begin{figure}
  \begin{center}
    \includegraphics[width=\textwidth]{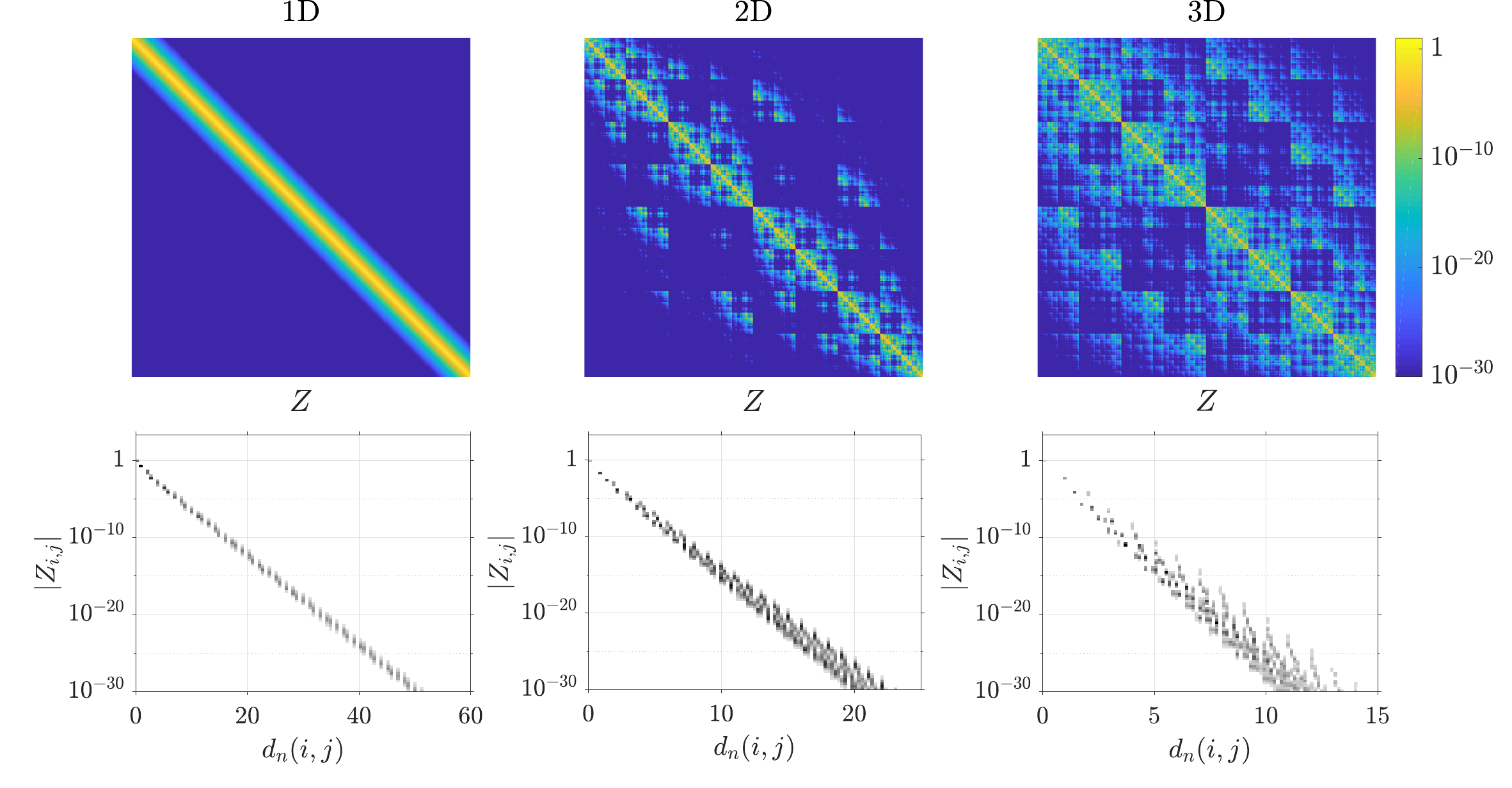}
  \end{center}
  \caption{Localization in $Z$ produced by
    Algorithm~\ref{alg:loc-inv-fact} for the lattice 1D, 2D, and 3D
    benchmark systems, with parameters and dimensions given in
    Table~\ref{tab:lattice_params}.  The upper panels show the matrix
    $Z$ as an image where the color indicates the magnitude of the
    matrix elements. The lower panels show the decay of matrix
    elements with distance between vertices on the lattice,
    corresponding to each of the upper panels.
    \label{fig:localization_Z}
  }
\end{figure}

\subsubsection{Localization in correction matrices}
The complete recursion in Algorithm~\ref{alg:loc-inv-fact} can be seen
as a sequence of corrections to the inverse factor, i.e.\ $Z =
\sum_{l=0}^{r-1} K_l$ where the subscript denotes the level of
recursion with $K_0$ being the correction at the root level of the
recursion and $r$ is the number of recursion levels. 
The correction matrix $K_l$ is a block diagonal matrix, where each
block corresponds to a node of the binary tree of recursive calls at
level $l$ and is, for $l<r-1$, equal to the sum of the matrices $M_i$ in the
iterative refinement corresponding to this node.
The matrix $K_{r-1}$ is, in our case where
we continue the recursion down to single matrix elements, the diagonal
matrix given by $K_{r-1} = \rm{diag}(S)^{-1/2}$.
Note that for $l>0$, the correction matrix $K_l$ 
spans all branches in the binary tree of recursion at level $l$.
We will use the correction matrices as representatives for the
matrices in Algorithm~\ref{alg:loc-inv-fact} corresponding to the
matrices in \eqref{eq:alg_matrices_second} through
\eqref{eq:alg_matrices_last}, which all show similar localization
behavior.

Figure~\ref{fig:localization_KX} shows images of the correction
matrices $K_l$ for $l=0,1,2,3$, for the one-, two-, and
three-dimensional lattice benchmark systems. It may be noted that
these figures correspond directly to the images of $Z$ in
Figure~\ref{fig:localization_Z}. One may for example note that the
upper right and lower left quadrants of $K_0$ are identical to the
corresponding submatrices of $Z$. This is expected as correction to
those submatrices only occurs at the root level of the
recursion. Again the localization, with rapid decay away from the cut
(or cuts for $l=1,2,3$), is clearly seen in the one-dimensional case,
but not as easily seen in the two- and three-dimensional cases.

To more clearly see the localization behavior
we plot in Figure~\ref{fig:localization_K0} the decay with distance
between vertices and the decay away from the cut for the root level
correction matrices $K_0$. Clearly, there is rapid decay both with
distance between vertices and away from the cut, which may be compared
to the decay in the matrices $Z$ seen in the lower panels in
Figure~\ref{fig:localization_Z}.

The key advantage of the localized inverse factorization of
Algorithm~\ref{alg:loc-inv-fact} over the regular recursive inverse
factorization of Algorithm~\ref{alg:rec-inv-fact} is its superior
localization properties. Algorithm~\ref{alg:rec-inv-fact} includes
products of matrices that feature localization of the type seen in
Figure~\ref{fig:localization_Z}, i.e.\ exponential decay with distance
between vertices. See for example the construction of $\delta_i$ on
line~\ref{algline:regular_delta_computation} of
Algorithm~\ref{alg:rec-inv-fact} with matrix products involving $Z_i$
and $S$.  Algorithm~\ref{alg:loc-inv-fact}, on the other hand,
involves only operations, matrix products and additions, where at
least one of the matrices feature localization of the type seen in
Figures~\ref{fig:localization_KX} and~\ref{fig:localization_K0}. This
means that, if only matrix elements of significant magnitude are
included in the calculation, the computational cost of
Algorithm~\ref{alg:loc-inv-fact} may be way lower than that of
Algorithm~\ref{alg:rec-inv-fact}.

\begin{figure}
  \begin{center}
    \includegraphics[width=\textwidth]{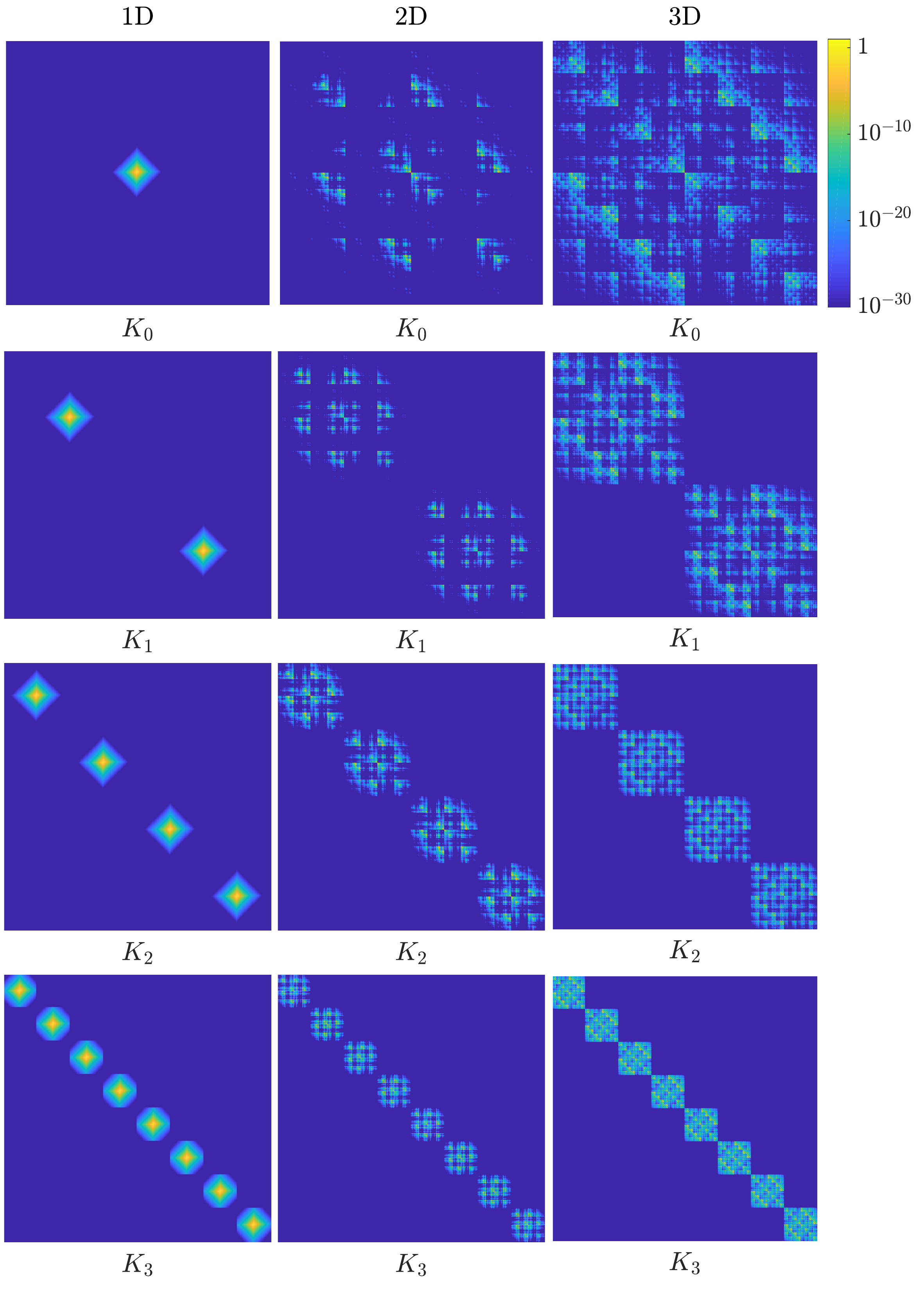}
  \end{center}
  \caption{Correction matrices $K_l,\, l=0,1,2,3$ as images,
    extracted from Algorithm~\ref{alg:loc-inv-fact} for the
    calculations that produced the $Z$-matrices in
    Figure~\ref{fig:localization_Z}. 
    \label{fig:localization_KX}
  }
\end{figure}

\begin{figure}
    \begin{center}
      \includegraphics[width=\textwidth]{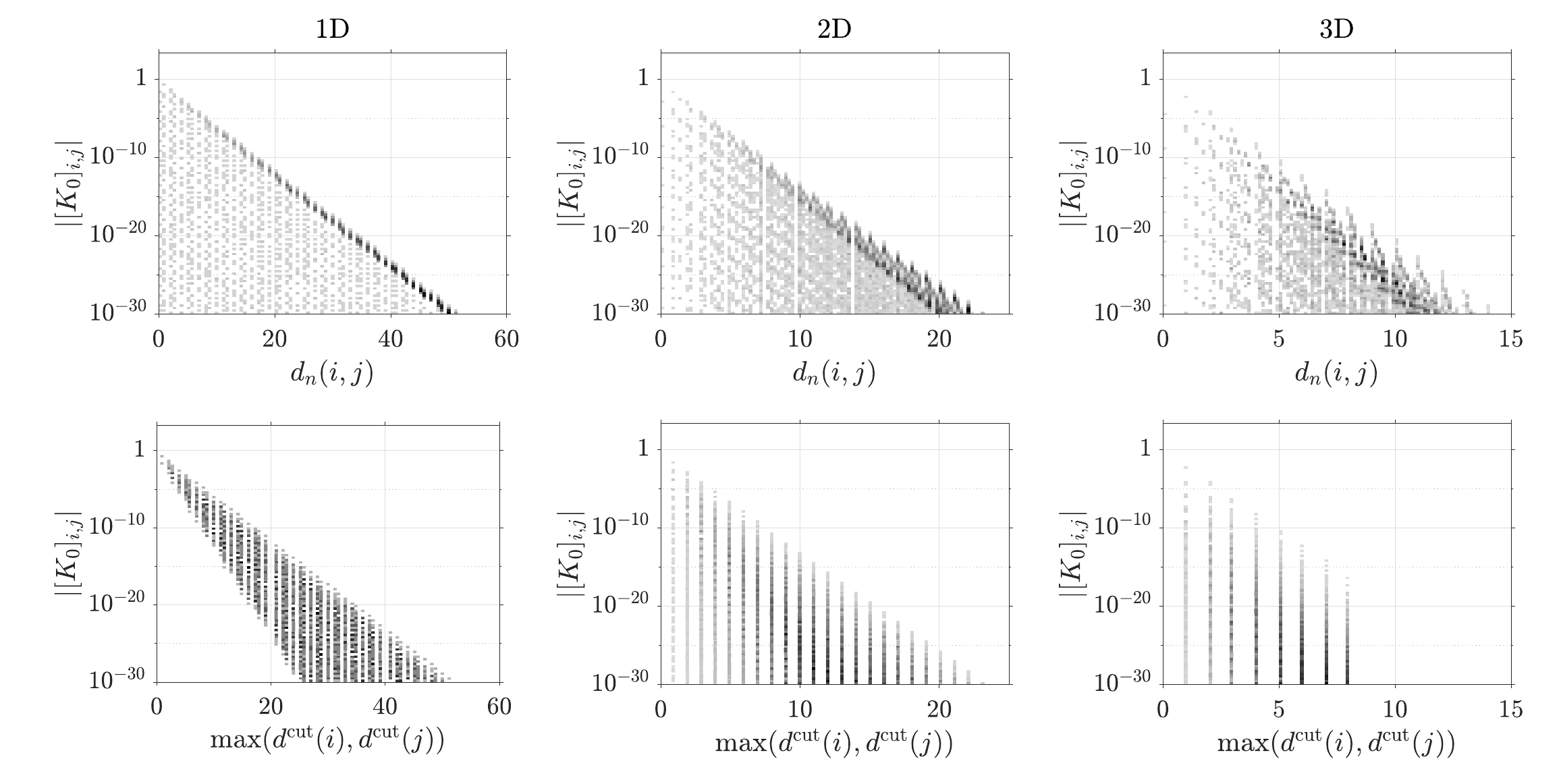}
    \end{center}
  \caption{Localization in the root level correction matrix $K_0$ for
    the calculations that produced the $Z$-matrices in
    Figure~\ref{fig:localization_Z}. The upper panels show the decay
    of matrix elements with distance between vertices on the
    lattice. The lower panels show the decay of matrix elements with
    distance away from the cut.
    \label{fig:localization_K0}
  }
\end{figure}

\subsubsection{Localization with increasing system size}
So far all calculations have been for the fixed system sizes given in
Table~\ref{tab:lattice_params}. We are now interested in how the
localization changes with increasing system size. We consider again
adjacency matrices corresponding to integer lattices but with
increasing dimension. We set the parameters $\alpha$ and $\beta$ as in
Table~\ref{tab:lattice_params}.  We consider one-dimensional grids
with 8, 16, 32, 64, 128, 256, and 512 vertices, two-dimensional grids
with $s\times s$ vertices where $s = 4, 8, 16, 32, 64$, and
three-dimensional grids with $s\times s \times s$ vertices where $s =
2, 4, 8, 16$.

To see how the localization changes with system size we plot in
Figure~\ref{fig:localization_system_size} the number of matrix entries
in $Z$ and $K_0$ whose absolute value is above a given threshold value
with increasing system size. For the final inverse factor $Z$ the
number of significant matrix entries increases linearly with system
size which is consistent with an exponential decay with respect to
distance between vertices $|Z_{ij}| \leq ce^{-\alpha
  d_n(i,j)},\ i,j=1,\dots,n$ with $c$ and $\alpha$ independent of $n$,
see Theorem~\ref{thm:on_entries_with_expdecay}.

For the correction matrices $K_0$
Figure~\ref{fig:localization_system_size} displays very different
behavior for the one-, two-, and three-dimensional systems. This can
be understood by considering how the cut size varies with system size.
In the 1D case, the number of vertices within any fixed distance $R$
from the cut, i.e.\ \emph{the cut size}, is bounded by a constant
independent of $n$.  In the 2D case, the cut size increases as
$O(\sqrt{n})$ and in the 3D case, the cut size increases as
$O(n^{2/3})$.  We recall from
Theorem~\ref{thm:k_entries_with_cutdecay} that a matrix that satisfies
both exponential decay with distance between vertices and away from a
cut with cut size $p(n)$ contains at most $O(p(n))$ entries with
magnitude greater than any given constant $\varepsilon$.  The nearly
perfect least squares fits in the lower panels of
Figure~\ref{fig:localization_system_size} indicate that the number of
entries in $K_0$ increases as $O(1)$, $O(\sqrt{n})$, and $O(n^{2/3})$
for the 1D, 2D, and 3D cases respectively.
The results in the lower panels of
Figure~\ref{fig:localization_system_size} are therefore consistent
with $K_0$ satisfying both exponential decay with respect to distance
between vertices $|(K_0)_{ij}| \leq ce^{-\alpha d_n(i,j)}$ and away
from the cut $|(K_0)_{ij}| \leq ce^{-\alpha
  \max(d^\mathrm{cut}_n(i),d^\mathrm{cut}_n(j))}$ with $c$ and
$\alpha$ independent of $n$.

\begin{figure}
  \begin{center}
    \includegraphics[width=\textwidth]{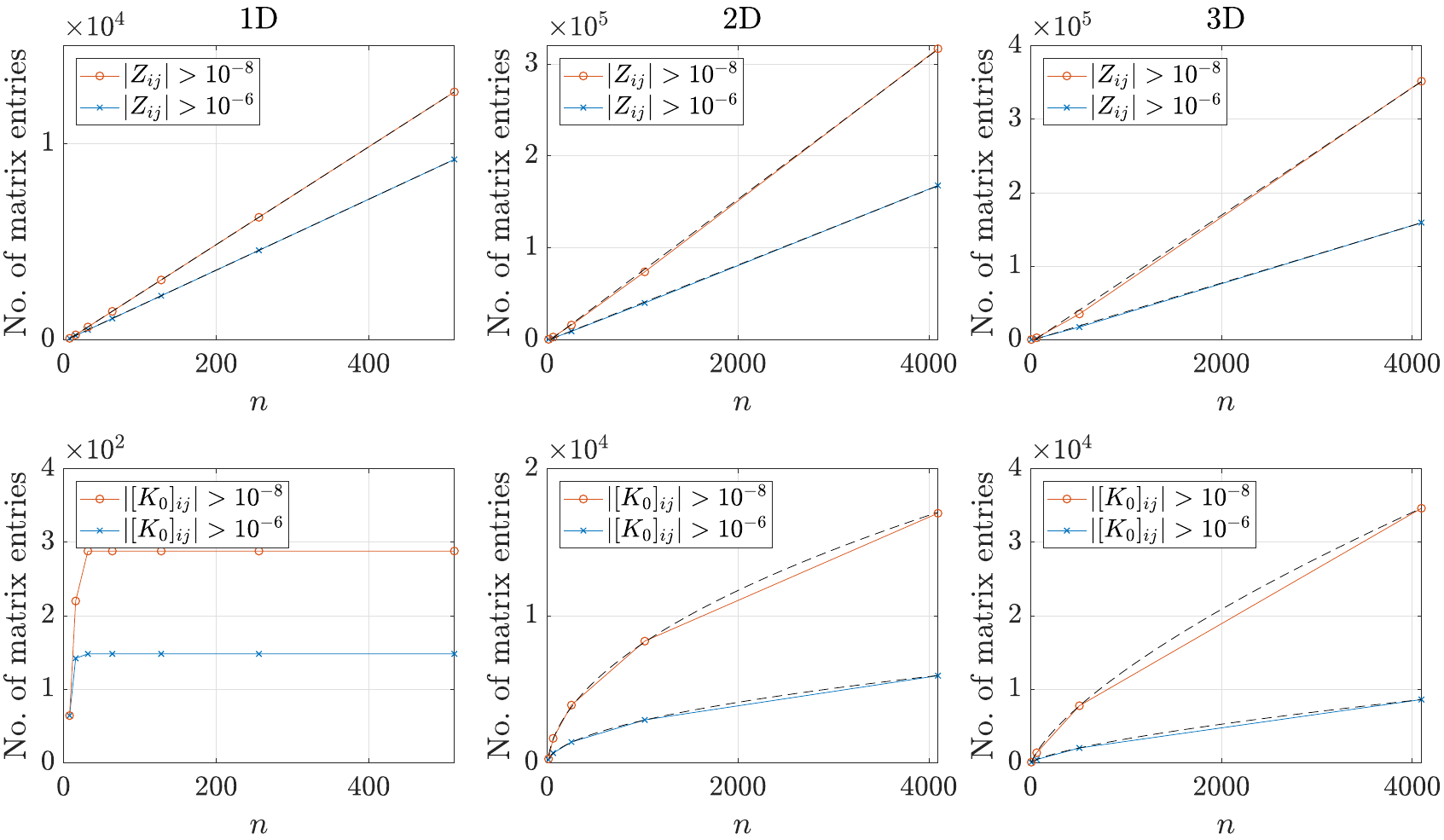}
  \end{center}
  \caption{Number of non-negligible matrix entries in the inverse
    factor $Z$ and the root level correction matrix $K_0$ as a
    function of system size, for Algorithm~\ref{alg:loc-inv-fact} and
    systems based on one-, two-, and three-dimensional lattices. The
    upper panels show the number of entries in $Z$ above $10^{-6}$ and
    $10^{-8}$. The dashed help lines in the upper panels show linear
    least squares fits to the data. The lower panels show the number
    of entries in $K_0$ above $10^{-6}$ and $10^{-8}$.
    The dashed help lines in the middle and right lower panels show
    $c_0 + c_1\sqrt{n}$ and $c_0 + c_1n^{1/3} + c_2n^{2/3}$ least
    squares fits, respectively. 
    \label{fig:localization_system_size}
  }
\end{figure}

\subsection{Alkane chains and water clusters}
We consider here application of the localized inverse factorization to
basis set overlap matrices occurring in Hartree--Fock and Kohn--Sham
density functional theory electronic structure calculations using
standard Gaussian basis sets. The overlap matrices were generated
using the Ergo open-source program for linear-scaling electronic
structure calculations~\cite{Ergo-SoftwareX-2018}, publicly available
at {\tt ergoscf.org} under the GNU Public License (GPL) v3.  In the
case of such overlap matrices, each vertex corresponds to a basis
function center and we let the distance function $d(i,j)$ be the
Euclidean distance between basis function centers $i$ and $j$.  The
magnitude of matrix entries decays as $|S_{ij}| \leq c e^{-\alpha
  d(i,j)^2}$ which is even faster than exponential
decay~\cite{bringing_about}.  In all calculations the standard
Gaussian basis set STO-3G was used. 
Similarly to the calculations on the lattice systems the basis
functions were ordered using a recursive binary divide space procedure
based on the coordinates of the basis function centers.

We consider overlap matrices for alkane chains and water clusters. We
run calculations with two different sizes for each type of system to
be able to see if the exponential decay properties are retained when
the system size is increased.  The alkane chain xyz coordinates were
generated using the {\verb|generate_alkane.cc|} program included in
the Ergo source code package. The water cluster xyz coordinates,
originally used in~\cite{KSDFT-linmem-2011}, are available for
download at {\tt ergoscf.org}.

Localization results for the alkane chains are plotted in
Figures~\ref{fig:loc_Z_alkane} and~\ref{fig:loc_K0_alkane}.
Figure~\ref{fig:loc_Z_alkane} shows the magnitude of the matrix
entries in $S^{-1}$ and $Z$ as functions of distance between vertices
or basis function centers along with an image of $Z$ indicating the
magnitude of the matrix entries as in previous figures.
Figure~\ref{fig:loc_K0_alkane} shows the magnitude of the matrix
entries in $K_0$ as a function of distance between vertices and as a
function of distance away from the cut along with an image of $K_0$.
The matrix dimensions in the 386 atom and 1538 atom cases are 898 and
3586, respectively. These numbers, in contrast to the 1D lattice
system considered above, are not powers of 2, which explains why
significant matrix entries of $K_0$ in Figure~\ref{fig:loc_K0_alkane}
corresponding to atom centers close to the cut are not located at
the center of the matrix.
The corresponding localization results for the water clusters are
plotted in Figures~\ref{fig:loc_Z_h2o} and~\ref{fig:loc_K0_h2o}.  The
dashed help lines makes it easy to see that the exponential decay
properties are retained for all matrices $S^{-1}$, $Z$, and $K_0$ and
for both types of systems.

\begin{figure}
  \begin{center}
    \includegraphics[width=\textwidth]{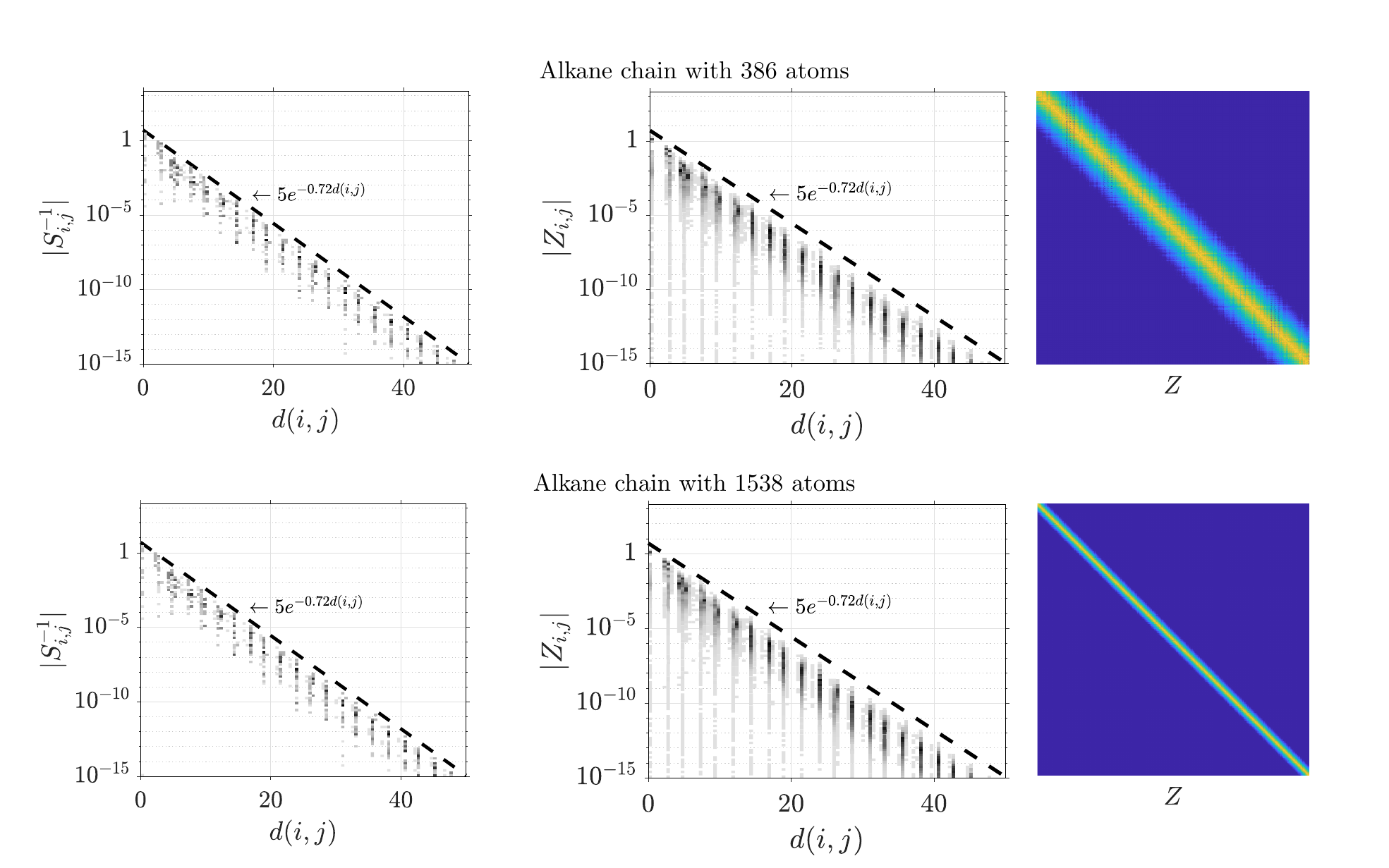}
  \end{center}
  \caption{Left and center panels: Magnitude of matrix entries as a function of
    distance between vertices (basis function centers) for $S^{-1}$
    and $Z$ computed using the localized inverse factorization.
    Right panel: Image of the inverse factor $Z$. Upper panels: Alkane
    chain with 386 atoms. Lower panels: Alkane chain with 1538 atoms.
    \label{fig:loc_Z_alkane}
  }
\end{figure}

\begin{figure}
  \begin{center}
    \includegraphics[width=\textwidth]{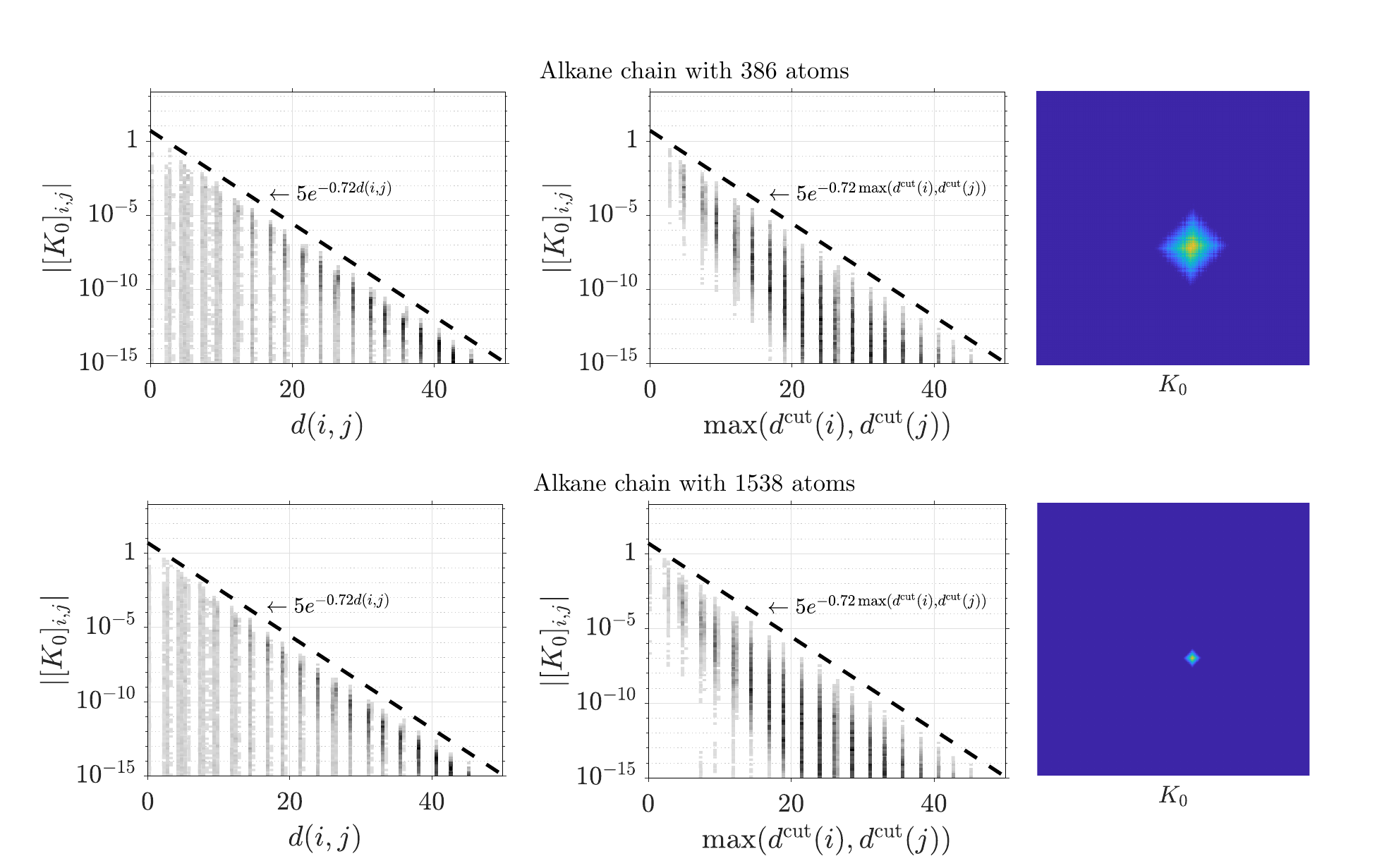}
  \end{center}
  \caption{Left and center panels: Magnitude of matrix entries as a function of
    distance between vertices and as a function of distance from cut
    for $K_{0}$. Right panel: Image of the correction matrix
    $K_0$. Upper panels: Alkane chain with 386 atoms. Lower panels:
    Alkane chain with 1538 atoms.
    \label{fig:loc_K0_alkane}
  }
\end{figure}

\begin{figure}
  \begin{center}
    \includegraphics[width=\textwidth]{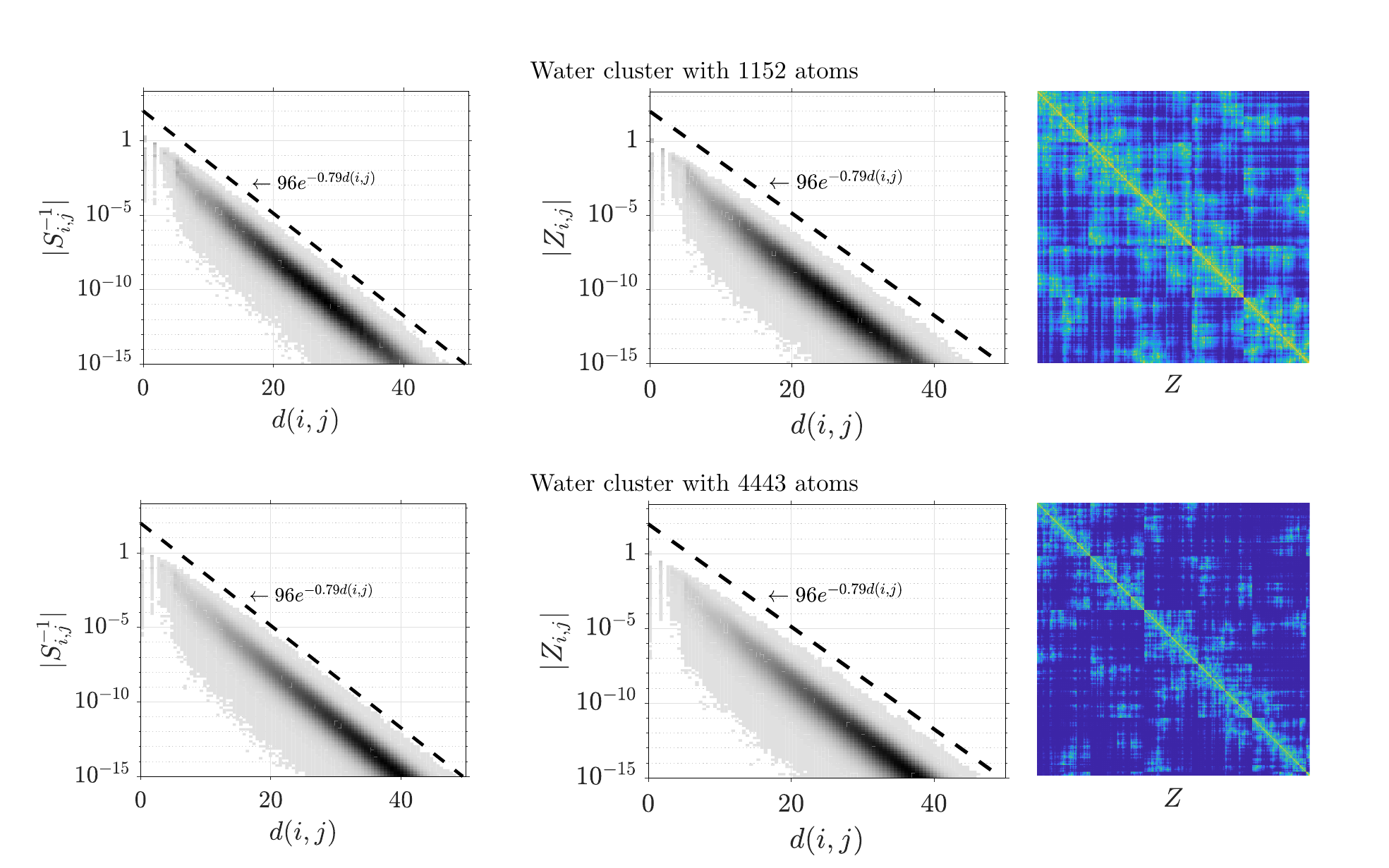}
  \end{center}
  \caption{Left and center panels: Magnitude of matrix entries as a function of
    distance between vertices (basis function centers) for $S^{-1}$
    and $Z$ computed using the localized inverse factorization.  Right
    panel: Image of the inverse factor $Z$. Upper panels: Water
    cluster with 1152 atoms. Lower panels: Water cluster with 4443
    atoms.
    \label{fig:loc_Z_h2o}
  }
\end{figure}

\begin{figure}
  \begin{center}
    \includegraphics[width=\textwidth]{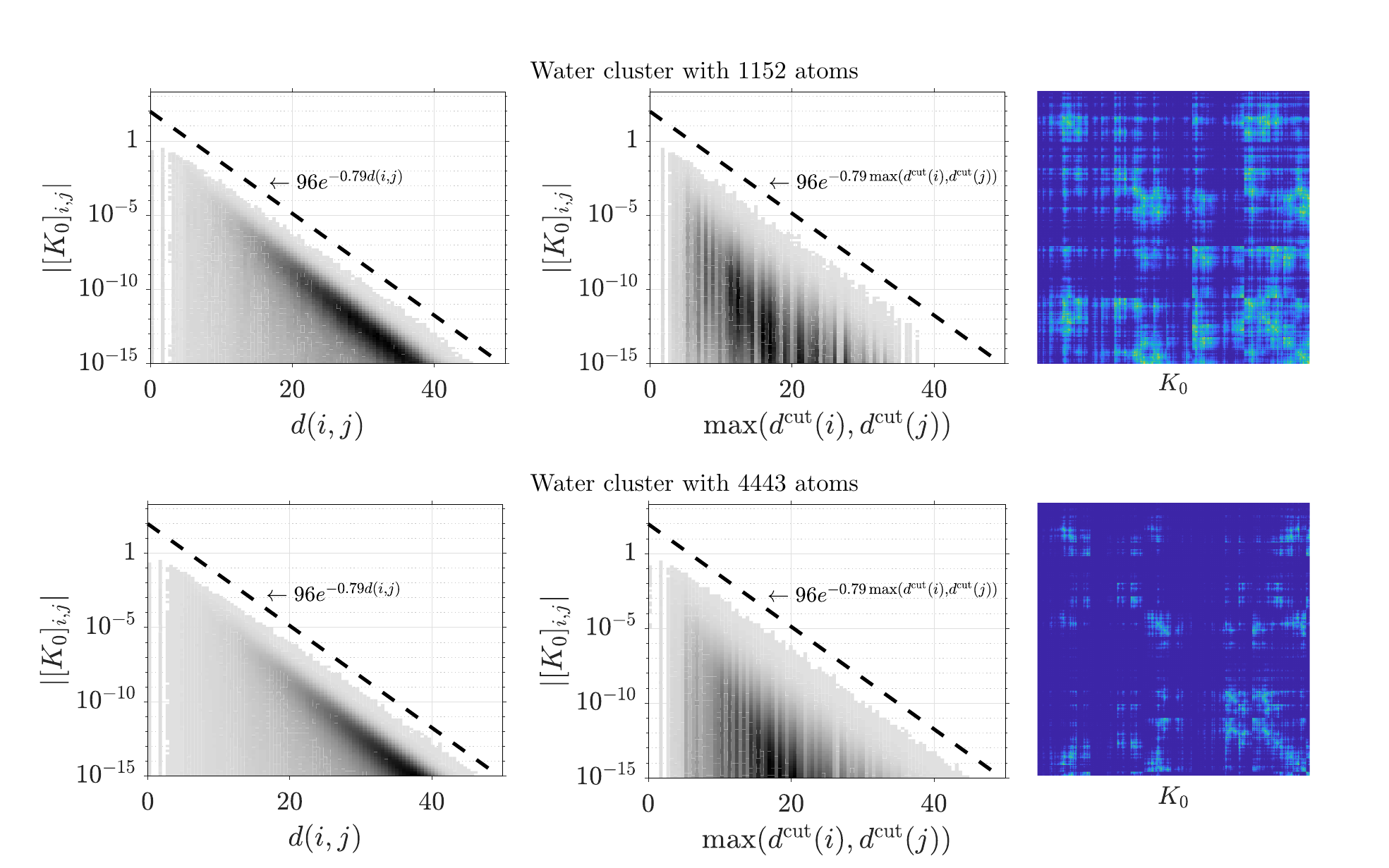}
  \end{center}
  \caption{Left and center panels: Magnitude of matrix entries as a function of
    distance between vertices and as a function of distance from cut
    for $K_{0}$. Right panel: Image of the correction matrix
    $K_0$. Upper panels: Water cluster with 1152 atoms. Lower panels:
    Water cluster with 4443 atoms.
    \label{fig:loc_K0_h2o}
  }
\end{figure}

\section{Concluding remarks}\label{sec:concl}
Previous work on the computation of inverse factors has to large
extent focused on approximations used as preconditioners for iterative
solution of linear systems~\cite{Benzi_1999_305}.  Examples, besides
the AINV algorithms already mentioned, include
FSAI~\cite{Kolotilina1993} and more recent variants~\cite{Franceschini2018}.
Methods making use of a recursive
partitioning of the matrix is used in direct methods for factorization
such as multifrontal
methods~\cite{davis_rajamanickam_sid-lakhdar_2016}. In this case the
matrix is seen as the adjacency matrix of a graph and the matrix is
partitioned using a three-by-three block partition corresponding to a
vertex separator of the graph. Multifrontal methods have typically
been used for example for the Cholesky decomposition and not its inverse
factor. However, the multifrontal approach could also be combined with
the AINV algorithm for direct computation of the inverse
Cholesky factor.  A starting guess for iterative refinement is
in~\cite{Negre2016} built up from the inverse factors of overlapping
principal submatrices.  The convergence has not been proven but this
approach could possibly lead to a lower initial factorization error
and reduced number of iterative refinement iterations compared to our
binary partition at the expense of more computations to construct the
starting guess. Note though that for large enough systems with
localization we have, under certain assumptions, shown that the cost
of our localized inverse factorization is completely dominated by the
solutions of the subproblems and not the iterative refinement used to
glue together their solutions.

A strength of the localized inverse factorization algorithms is the
proved convergence for any binary partition of the index
set. Inappropriate partitions at worst lead to poor localization and
higher computational cost.  We used in our numerical experiments a
straightforward space-dividing algorithm for the recursive binary
partition of the index set. More advanced partitioning algorithms
could take into account the magnitude of the entries in $S$ and
attempt to minimize the cut size and the initial factorization error.

Our localized inverse factorization algorithm could be combined with
some direct inverse factorization method that is efficient for small
dense or semi-sparse systems. 
In the numerical experiments we continued the recursion all the way
down to single matrix elements. It would in general be more efficient
to stop the recursion as soon as the matrix size is smaller than some
predetermined block size and use for example one of the AINV
algorithms to compute the inverse factor.  Another possibility is to
use regular recursive inverse factorization for intermediate levels in
the recursion.

We showed in Section~\ref{sec:localization} that under certain
assumptions both the regular and localized iterative refinement of
Algorithms~\ref{alg:iter_refine} and~\ref{alg:local_refine} involve
only matrices with exponential decay with distance between
vertices. However, all matrices involved in
Algorithm~\ref{alg:local_refine} also have the property of exponential
decay away from the cut. This means that whereas the number of
significant matrix entries in Algorithm~\ref{alg:iter_refine} grows
not faster than $O(n)$, the number of significant matrix entries in
Algorithm~\ref{alg:local_refine} also does not grow faster than the
cut size. For binary partitions with a cut size increasing as $o(n)$
and large enough systems the localized algorithm is therefore superior
to the regular version.
Note that we have not provided a rigorous proof for the
localization features of the full recursive and localized inverse
factorization algorithms demonstrated in Section~\ref{sec:numerexp}.
There are two important differences compared to
Algorithms~\ref{alg:iter_refine} and~\ref{alg:local_refine}. Firstly,
it is difficult to come up with a strict bound for the number of
iterations since the stopping criterion relies on stagnation due to
numerical errors.  Secondly, the computed inverse factor is the result
of a sum over $r = O(\log(n))$ correction matrices, where $r$ is the
number of levels in the recursion. Clearly, $r$ tends to infinity
together with $n$ so that the number of correction matrices in the sum
tends to infinity with increasing $n$, which entails another
difficulty. Nevertheless, all our numerical experiments indicate that
the localization properties of Algorithms~\ref{alg:iter_refine}
and~\ref{alg:local_refine} are inherited by their recursive
counterparts.  Furthermore, our results indicate that the total
computational cost can increase linearly with system size if
negligible matrix entries are not stored nor included in the
calculation.  In summary, the theoretical results of
Section~\ref{sec:localization} together with the numerical experiments
of Section~\ref{sec:numerexp} make us confident that the localized
inverse factorization, especially with regard to localization,
represents a dramatic improvement over the regular recursive inverse
factorization.

The localized inverse factorization is also well suited for
parallelization. The two subproblems of computing inverse factors of
the two principal submatrices at each level of the recursion are
completely disconnected and can thus be solved in parallel without any
communication in between.  For systems with localization, any
communication needed in the iterative refinement step to combine the
subproblem solutions involves only data close to the cut. Using an
appropriate partition of the index set, the cut size is vanishingly
small in proportion to the total problem size for large enough
systems.

This work did not concern computational details regarding how to
select matrix entries for removal or other implementation issues such
as how to best parallelize the method on a particular computer
architecture. The efficient implementation of the localized inverse
factorization algorithm will be considered elsewhere.
 
\bibliographystyle{siamplain}
\bibliography{biblio}
\end{document}